\newtheorem{theorem}{Theorem}[section]
\newtheorem{lemma}[theorem]{Lemma}
\newtheorem{proposition}[theorem]{Proposition}
\newtheorem{remark}[theorem]{Remark}
\newtheorem{definition}[theorem]{Definition}
\newtheorem{ex}[theorem]{Example}
\newcommand{\R}{\mathbb R}
\newcommand{\N}{\mathbb N}
\newcommand{\E}{\mathrm{E}}
\newcommand{\bs}{\boldsymbol}
\newcommand{\mean}[1]{\overline{#1}}
\newcommand{\Ol}{\mathcal{O}}
\NewDocumentCommand{\mat}{mo}{%
  \IfValueTF{#2}{%
    \underline{\underline{#1}}{#2}
  }{%
    \underline{\underline{#1}}\,
  }%
}
\def\L{\mathcal{L}}
\def\I{\mathcal{I}}
\def\R{\mathbb{R}}
\def\bbc{\underline{\mathbf{c}}}
\def\bc{\mathbf{c}}
\def\bbd{\underline{\mathbf{d}}}
\def\M{\mathrm{M}}
\definecolor{darkspringgreen}{rgb}{0., 0.55, 0.3}
\definecolor{dartmouthgreen}{rgb}{0.05, 0.5, 0.06}
\definecolor{etonblue}{rgb}{0.59, 0.78, 0.64}
\definecolor{airforceblue}{rgb}{0., 0.4, 0.66}
\definecolor{arylideyellow}{rgb}{0.91, 0.84, 0.42}
\definecolor{emerald}{rgb}{0.31, 0.78, 0.47}
\definecolor{uclagold}{rgb}{1.0, 0.7, 0.0}
\definecolor{cadmiumorange}{rgb}{0.93, 0.53, 0.18}
\begin{document}
\title{Arbitrary high-order, conservative and positive preserving Patankar-type deferred correction schemes }

\author{Philipp \"Offner\thanks{Corresponding author: P. \"Offner, philipp.oeffner@math.uzh.ch}\,  and Davide Torlo \thanks{Corresponding author: D. Torlo, davide.torlo@math.uzh.ch} 
 \\
Institute of Mathematics,
University of Zurich, Switzerland}
\date{August 6th, 2019}
\maketitle

\begin{abstract}

 Production-destruction systems (PDS)  of ordinary differential equations (ODEs)
are used to describe physical and biological reactions in nature. The considered quantities are subject to natural laws.
Therefore, they preserve positivity and conservation of mass at the analytical level.

In order to maintain these properties at the discrete level, the so-called modified Patankar-Runge-Kutta (MPRK) schemes are often used in this context.
However, up to our knowledge, 
the family of MPRK has been only developed up to third order of accuracy. In this work, we propose a method to solve PDS problems, but using the Deferred Correction (DeC) process as a time integration method. Applying the modified Patankar approach to the DeC scheme results in provable conservative and positivity preserving methods.
Furthermore, we demonstrate that these modified Patankar DeC schemes can be constructed up to arbitrarily high order. 
Finally, we validate our theoretical analysis through numerical simulations. 


\end{abstract}

\section{Introduction}\label{sec:intro}

The modelling of geobiochemical processes or ecosystems leads often to systems of ordinary differential equations (ODEs) which 
can be formulated in the so-called \textbf{production-destruction} systems (PDS) as  described in \cite{burchard2005application,hense2010representation} 
for example.
To guarantee the physical and chemical laws, the quantities have to fulfil several conditions like positivity and conservation. 

The applied numerical method should not violate these conditions and big efforts have been devoted to designing conservative 
and positivity preserving schemes, since classical approaches like Runge Kutta (RK) schemes do not guarantee these properties.

In \cite{burchard2003high}
the authors suggest modified Patankar-type methods of first and second order which verify the desired properties, i.e., conservation and positivity.
Recently, further extensions were done to construct modified Patankar-Runge-Kutta (MPRK)
schemes of second and third order \cite{kopecz2018order,kopecz2019existence,kopecz2018unconditionally,huang2018third,huang2019positivity}.
As the name suggests, all these schemes use, as a basic procedure, the Runge-Kutta method, which has been modified by weighting the 
production and destruction terms as suggested in \cite{patankar1980numerical}. Thanks to these weighting coefficients, the schemes are forced to maintain positivity of the variables and to conserve some quantities of interest. However, the described and constructed schemes are, up to our knowledge, at most third order accurate.

In this paper, we present a way to construct \textbf{arbitrary high-order, positivity preserving, numerically robust and conservative} schemes
for PDS. Differently from previous schemes, we do not start building our schemes on RK methods. We consider the Deferred Correction (DeC) procedure, a high order time integration technique, and we modify it, in order to obtain a positivity preserving, conservative and arbitrary high-order scheme. Moreover, we provide a proof of the desired properties.

The paper is organised as follows.\\
In section \ref{sec:PDS} we introduce the production-destruction systems and we give a short introduction about the so-called Patankar trick 
and how it was applied in \cite{burchard2005application} to construct a modified Patankar-type scheme starting from the explicit Euler method.
Afterwards, in section \ref{sec:originalDeC}, we introduce the Deferred Correction (DeC) method and we discuss conservation and positivity for this classical formulation. 
In section \ref{sec:mPDeC}, we build the main core of this work, explaining our modification of DeC through the Patankar trick (mPDeC) and we prove that the obtained mPDeC schemes are positive 
preserving, conservative and arbitrary high-order accurate.
In section \ref{sec:Num}, we validate our theoretical investigations, considering three different benchmark problems, which are also discussed in different literature references, as \cite{burchard2003high,kopecz2018order}.
Finally, we give a summary and an outlook for possible extensions.

\section{Production--Destruction Systems}\label{sec:PDS}

In this paper we consider production-destruction systems (PDS) of the form 
\begin{equation}\label{eq:original_model}
\begin{cases}
d_t c_i = P_i(\bc ) - D_i(\bc ) , \quad i=1,\dots,I,\\
\bc(t=0)=\bc_0,
\end{cases}
\end{equation}
where $\bc = (c_1, \dots , c_I) ^T \in \mathbb{R}^I$ represents the vectors of $I$ constituents, $t$ denotes the time and $\bc_0$ the initial condition.
Moreover, $P_i(\bc)$ and $D_i(\bc)$ represent the production and destruction rates of the $i$-th constituent
and both terms are assumed to be non-negative, i.e, $P_i,\;D_i\geq0$ for $i=1,\dots,I$. 
These systems rise naturally to describe geochemical processes as it is described in
\cite{burchard2003high,burchard2005application}
and we recapitulate their notations and definitions in this section. \\
The production and destruction terms can also be written in a \textit{matrix form} as follows
\begin{equation}\label{eq:matrix}
P_i(\bc) = \sum_{j=1}^I p_{i,j}(\bc), \quad D_i(\bc) = \sum_{j=1}^I d_{i,j}(\bc),
\end{equation} 
where each term $p_{i,j}\geq 0$ and $d_{i,j}\geq 0$ are Lipschitz continuous functions and may depend linearly or non--linearly on $\bc$. 
Furthermore, the term $d_{i,j}$ describes the rate of change from the $i$-th to the $j$-th constituent while $p_{i,j}$ is the rate 
at which the $j$-th constituent is transformed into the $i$-th.\\
We are interested in (fully) conservative and positive production--destruction systems. 
To clarify these expressions we repeat the definitions from \cite{kopecz2018order}.
\begin{definition}\label{def:PDS_positve}
 The PDS \eqref{eq:original_model} is called \textbf{positive} if positive initial values $c_i(0)>0$ for $i=1,\dots, I$ imply positive solutions, $c_i(t)>0$ for $i=1, \cdots, I$ for all times $t>0$.\\
 The PDS \eqref{eq:original_model} is called \textbf{conservative} if at any time $t\geq 0$, we have that 
\begin{equation}\label{eq:conservation}
\sum_{i=1}^I c_i(t) = \sum_{i=1}^I c_i(0)
\end{equation}
is fulfilled. 
In the analytic form \eqref{eq:original_model}, the conservation property \eqref{eq:conservation} is equivalent to the following relation for the matrix representation \eqref{eq:matrix}
\begin{equation}\label{eq:conservation_matrix}
p_{i,j}(\bc)=d_{j,i}(\bc), \quad \forall i,j=1,\dots, I. 
\end{equation}
Moreover, the system is called \textbf{fully conservative}
if additionally $p_{i,i}(\bc)=d_{i,i}(\bc)=0$ holds for all $\bc\geq 0$ and $i=1,\dots, I$. 
\end{definition}

As it is described  in \cite{kopecz2018order} every conservation PDS can be written in a fully conservative formulation.
We can rewrite the two terms of \eqref{eq:conservation_matrix} into one matrix of exchanging quantities $e(\bc)$ defined as
\begin{equation}\label{eq_matrix_zero}
e_{i,j}(\bc) := p_{i,j}(\bc) -d_{i,j}(\bc).
\end{equation} 
Clearly, from property \eqref{eq:conservation_matrix}, we have that $e_{i,i}=0$. With this notation, let us define the total exchange rate for the $i$-th constituent as
\begin{equation}
E_i(\bc):=P_i(\bc)-D_i(\bc).
\end{equation}


%

A numerical method suited to solve a conservative and positive PDS \eqref{eq:original_model} should mimic, 
at the discrete level, the continuous setting properties. For a one-step methods, we can introduce the discrete analogues of definitions \eqref{def:PDS_positve}.
\begin{definition}
 Let $\bc^{n}$ denote the approximation of $\bc(t^{n})$ at the time level $t^n$.  A one-step method
\begin{equation}\label{eq:one_step}
 \bc^{n+1}=\bc^n+\Delta t \Phi(t^n,\bc^n, \bc^{n+1}, \Delta t),
\end{equation}
with process function $\Phi$,  is called 
\begin{itemize}
 \item \textbf{unconditionally conservative} if for all $n\in \N$ and $\Delta t>0$
 \begin{equation}
  \sum_{i=1}^I c^{n+1}_i = \sum_{i=1}^I  c_i^n
 \end{equation}
holds;
\item \textbf{unconditionally positive} if for all $\Delta t >0$ and $\bc^n>0$, we have that 
$\bc^{n+1}>0$.
\end{itemize}
\end{definition}
\begin{ex}\label{Ex:Euler_}
Let us consider as an example the explicit Euler method. The method is defined by
\begin{equation}
\bc^{n+1}= \bc^n +\Delta t E_i(\bc^n).
\end{equation} 
It is conservative 
since 
\begin{equation}
 \sum_{i=1}^I \left( c_i^{n+1}-c_i^n\right)=  \sum_{i=1}^I \left( c_i^{n}+\Delta t \sum_{i=1}^I \left(p_{i,j}(\bc^n)-d_{i,j}(\bc^n)
 \right) -c_i^n\right)=\Delta t \sum_{i=1}^I \left(p_{i,j}(\bc^n)-d_{i,j}(\bc^n)
 \right) =0
\end{equation}
holds. Conversely, the explicit Euler method is not unconditionally positive.
Consider a conservative and positive PDS \eqref{eq:original_model} where we assume that 
the right hand side is not identical zero. Then, there exists 
a $\bc^n\geq0$ such that $\bf{P}(\bc^n)-\bf{D}(\bc^n)\neq0$. Since the PDS is conservative, we can at least
find one constituent $i\in \lbrace  1,\dots, I \rbrace$, where $D_i(\bc^n)>P_i(\bc^n)\geq0$. Choosing
\begin{equation}
\Delta t >\frac{c_i^n}{D_i(\bc^n)-P_i(\bc^n)} > 0,
\end{equation}
we obtain 
\begin{equation}
 c_i^{n+1}=c_i^{n} +\Delta t\left(P_i(\bc^n)-D_i(\bc^n)\right)<c_i^{n} +\frac{\bc_i^n}{D_i(\bc^n)-P_i(\bc^n)} \left(P_i(\bc^n)-D_i(\bc^n)\right)
 =c_i^{n}-c_i^{n}=0.
\end{equation}
This demonstrates the violation of the positivity for the explicit Euler method for unbounded timesteps $\Delta t$. 
\end{ex}
To build an unconditionally positive numerical scheme,
Patankar had the idea in \cite{patankar1980numerical}
of weighting the destruction term in the original explicit Euler methods
with the following coefficient
\begin{equation}\label{eq:patankar}
c_i^{n+1}=c_i^n+\Delta t  \left( \sum_{j=1}^I p_{i,j}(\bc^n) - 
\sum_{j=1}^I d_{i,j}(\bc^n) \frac{c^{n+1}_i}{c_i^n} \right), \quad i=1,\dots, I.
\end{equation}
Hence, the scheme \eqref{eq:patankar} is unconditionally positive, but the conservation relation 
is violated. 
In \cite{burchard2003high} a modification of the Patankar scheme \eqref{eq:patankar} was presented, resulting in an unconditionally positive and conservative method. It is defined as follows.
\begin{equation}\label{eq:mod_patankar}
c_i^{n+1}:=c_i^n+\Delta t  \left( \sum_{j=1}^I p_{i,j}(\bc^n) \frac{c^{n+1}_j}{c_j^n} - \sum_{j=1}^I d_{i,j}(\bc^n) \frac{c^{n+1}_i}{c_i^n} \right), \quad i=1,\dots, I.
\end{equation}
The scheme is implicit and can be solved inverting 
the mass matrix $\M$ in the system $\M\bc^{n+1}=\bc^n$ where $\M$ is
\begin{equation}\label{eq:mass_matrix_mod_patankar}
\begin{cases}
m_{i,i}(\bc^n) = 1+\Delta t \sum_{k=1}^I \frac{d_{i,k}(\bc^n)}{c_i^n} , \quad i=1,\dots, I,\\
m_{i,j}(\bc^n) = - \Delta t \frac{p_{i,j}(\bc^n)}{c_j^n} , \quad i,j=1,\dots , I ,\, i\neq j.
\end{cases}
\end{equation}
The construction of the mass matrix $\M$ must follow substantial prescriptions in order to preserve
the positivity of the scheme, as suggested in \cite{kopecz2018unconditionally}.
\begin{remark}
 Extensions of the modified Patankar scheme \eqref{eq:mod_patankar}
to Runge-Kutta schemes were proposed in
\cite{kopecz2018order,kopecz2018unconditionally} and further developed in \cite{huang2019positivity,huang2018third}.
Special focus lies in the weighting of the production and destruction terms as it 
is investigated for example in \cite{kopecz2019existence} and references therein.
Families of second and third order modified Patankar-Runge-Kutta (MPRK) schemes can 
be found in the mentioned literature.
We do not provide the definition of MPRK because the modified Patankar scheme 
 \eqref{eq:mod_patankar}  already gives us the basic idea for the new methods we want to propose. We will prove that these methods are positivity preserving, conservative and 
arbitrary high-order. \\
\end{remark}

\section{Deferred Correction Methods}\label{sec:originalDeC}

There are various approaches to solve numerically an ODE. 
A first approach is given by finite differences, where the derivative in time is replaced by differences of states in different timesteps. Backward and forward Euler are examples of this kind of strategy. Another approach would be to  reformulate the ODE
by integrating it in time. With different quadrature formulas and approximation techniques 
one can obtain various Runge-Kutta methods (explicit and implicit ones), see \cite{hairer1991solving,wanner1996solving} for details.  However,  we
follow a different approach in this paper.\\
We start our investigation with the \textbf{Deferred Correction (DeC)} method introduced
in \cite{dutt2000dec}.  In its original formulation, it is an explicit, arbitrary high order method
for ODEs. Further extensions of DeC can be found in the literature, including semi-implicit approaches
as in \cite{minion2003dec}. However, in this work we will not consider the semi-implicit framework. Instead, we will focus on the \textbf{explicit} DeC approach used by Abgrall in \cite{abgrall2017dec}.
In our opinion, his notation describes DeC in a more compact way than in 
previous works \cite{dutt2000dec,christlieb2010integral,liu2008strong}.\footnote{We like to mention that Abgrall focused on DeC as a time integration scheme in the context of 
finite element methods. 
Applying a classical RK method, a dense mass matrix has to be inverted and Abgrall 
wanted to avoid this.
 By using a DeC scheme, instead, he showed that a mass matrix free approach is possible \cite{abgrall2017dec}. } 
Nevertheless, the main idea is always the same 
and it is based on the Picard-Lindel\"of theorem in the continuous setting. 
The theorem states the existence and uniqueness of 
solutions for ODEs. The classical proof makes use of the so-called Picard iterations to minimize the error and to prove convergence. The foundation of DeC relies on mimicking the Picard iterations at the
discrete level. The approximation error decreases with several iteration steps. 
For the description of DeC, Abgrall
introduces two operators: $\L^1$ and $\L^2$.\\
Here, 
the $\L^1$ operator represents a low-order easy-to-solve numerical scheme, e.g. the explicit Euler method, 
and $\L^2$ is a high-order operator that can present difficulties in its practical solution, e.g. an implicit RK scheme.
The DeC method can be written as a combination of these two operators.\\
Given a timeinterval $[t^n, t^{n+1}]$ we subdivide  it into $M$ subintervals  $\lbrace [t^{n,m-1},t^{n,m}]\rbrace_{m=1}^M$, where $t^{n,0} = t^n$ and $t^{n,M} = t^{n+1}$ and we mimic for every 
subinterval $[t^0, t^m]$ the Picard--Lindel\"of theorem for both operators $\L^1$ and $\L^2$. We drop the dependency on the timestep $n$ for subtimesteps $t^{n,m}$ and substates $\bc^{m,n}$ as denoted in Figure \ref{Fig:Time_interval}.  
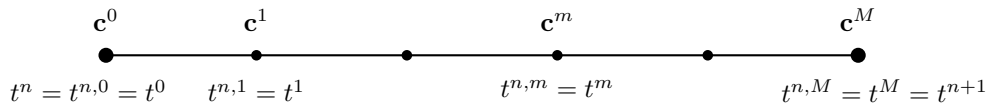
\begin{figure}[ht]
	\centering
\begin{tikzpicture}
\draw [thick]   (0,0) -- (10,0) node [right=2mm]{};
\fill[black]    (0,0) circle (1mm) node[below=2mm] {$t^n=t^{n,0}=t^0 \,\, \quad$} node[above=2mm] {$\bc^0$}
                            (2,0) circle (0.7mm) node[below=2mm] {$t^{n,1}=t^1$} node[above=2mm] {$\bc^1$}
                            (4,0) circle (0.7mm) node[below=2mm] {}
                            (6,0) circle (0.7mm) node[below=2mm] {$t^{n,m}=t^m$} node[above=2mm] {$\bc^m$}
                            (8,0) circle (0.7mm) node[below=2mm] {}
                            (10,0) circle (1mm) node[below=2mm] {$\qquad t^{n,M}=t^M=t^{n+1}$} node[above=2mm] {$\bc^M$}; 
\end{tikzpicture} \caption{Figure: divided time interval}\label{Fig:Time_interval}
\end{figure}

Then, the $\L^2$ operator is given by
\begin{equation}\label{eq:L2operator}
\L^2(\bc^0, \dots, \bc^M) :=
\begin{cases}
\bc^M-\bc^0 -\int_{t^0}^{t^M} \I_M ( E(\bc^0),\dots,E(\bc^M))\\
\vdots\\
\bc^1-\bc^0 - \int_{t^0}^{t^1} \I_M ( E(\bc^0),\dots,E(\bc^M))
\end{cases}.
\end{equation}
Here, the term $\I_M$  denotes an interpolation polynomial of order $M$ evaluated at the points $\lbrace  t^{n,r}\rbrace _{r=0}^M$. In particular, we use Lagrange polynomials $\lbrace \varphi_r \rbrace_{r=0}^M$, where $\varphi_r(t^{n,m})=\delta_{r,m}$ and $\sum_{r=0}^M \varphi_r(s) \equiv 1$ for any $s\in [0,1]$. 
Using these properties, 
we can actually compute the integral of the interpolants, thanks to a quadrature rule in the same points $\lbrace t^m \rbrace_{m=0}^M$ with weights $\theta_r^m := \int_{t^n}^{t^{n,m}} \varphi_r(s) ds$. 
We can rewrite
\begin{equation}\label{eq:L2}
\L^2(\bc^0, \dots, \bc^M) =
\begin{cases}
\bc^M-\bc^0 - \sum_{r=0}^M \theta_r^M E(\bc^r)\\
\vdots\\
\bc^1-\bc^0 - \sum_{r=0}^M \theta_r^1 E(\bc^r)
\end{cases}.
\end{equation}
The $\L^2$ operator represents an $(M+1)$ order numerical scheme if set equal
to zero, i.e., $\L^2(\bc^0, \dots, \bc^M)=0$. Unfortunately, the resulting scheme is implicit and, further, the terms $E$ may be non-linear. Because of this, the only $\L^2$ formulation is not explicit and more efforts have to be  made to solve it.

For this purpose, we introduce a simplification of  the $\L^2$ operator. Instead of using a quadrature formula at the points $\lbrace t^m \rbrace_{m=0}^M$ we evaluate 
the integral in equation \eqref{eq:L2operator} applying the left Riemann sum. 
The resulting operator $\L^1$ is given by the forward Euler discretization for each state $\bc^m$ in the timeinterval, i.e.,  
\begin{equation}\label{eq:L1}
\L^1(\bc^0, \dots, \bc^M) :=
\begin{cases}
 \bc^M-\bc^0 - \beta^M \Delta t E(\bc^0) \\
\vdots\\
\bc^1- \bc^0 - \beta^1 \Delta t E(\bc^0)
\end{cases}.
\end{equation}
with coefficients $\beta^m:=\frac{t^m-t^0}{t^M-t^0}$.\\
To simplify the notation and to describe  DeC, we  introduce the vector of states for the variable $\bc$ at all subtimesteps\footnote{We provide a table with all definitions and notations in the appendix \ref{sec:notation}.}
\begin{align}\label{eq:definition_bbc}
&\bbc :=  (\bc^0, \dots, \bc ^M) \in \R^{M\times I}, \text{ such that }\\
&\L^1(\bbc) := \L^1(\bc^0, \dots, \bc^M) \text{ and } \L^2(\bbc) := \L^2(\bc^0, \dots, \bc^M) .
\end{align}
Now, the DeC algorithm uses a combination of the $\L^1$ and $\L^2$ operators
to provide an iterative procedure. The aim is to recursively approximate $\bbc^*$, the numerical solution of
the $\L^2=0$ scheme, similarly to the Picard iterations in the 
continuous setting. The successive states of the iteration process will be denoted 
by the superscript $(k)$, where $k$ is the iteration index, e.g. $\bbc^{(k)}\in \R^{M\times I}$.
The total number of iterations (also called correction steps in the following) is denoted by $K$.
To describe the procedure, we have to refer to both 
 the $m$-th subtimestep and the $k$-th iteration of the DeC algorithm. We will indicate the variable by $\bc^{m,(k)} \in \R^I$.
 Finally, the DeC method can be written as \\

 \centerline{\textbf{DeC Algorithm}}
%
%
\begin{equation}\label{DeC_method}
\begin{split}
&\bc^{0,(k)}:=\bc(t^n), \quad k=0,\dots, K,\\
&\bc^{m,(0)}:=\bc(t^n),\quad m=1,\dots, M\\
&\L^1(\bbc^{(k)})=\L^1(\bbc^{(k-1)})-\L^2(\bbc^{(k-1)}) \text{ with }k=1,\dots,K,
\end{split}
\end{equation}
where $K$ is the number of iterations that we  want to compute. 
Using the procedure \eqref{DeC_method}, we  need, in particular, as many iterations as the desired  order of accuracy, i.e., $K=d= M+1$. 

Notice that, in every step, we solve the equations for the unknown variables $\bbc^{(k)}$ which appears only in the $\L^1$ formulation, the operator that can be easily inverted. Conversely, $\L^2$ is only applied to already computed predictions of the solution $\bbc^{(k-1)}$.
Therefore, the scheme \ref{DeC_method} is completely explicit and arbitrary high order as stated in \cite{abgrall2017dec} with
 the following proposition.
\begin{proposition}\label{DeC_prop}
Let $\L^1$ and $\L^2$ be two operators defined on $\mathbb{R}^M$, which depend on the discretization scale $\Delta = \Delta t$, such that
\begin{itemize}
\item $\L^1$ is coercive with respect to a norm, i.e., $\exists\, \alpha_1 >0$ independent of $\Delta$, such that for any $\bbc,\bbd$ we have that $$\alpha_1||\bbc-\bbd||\leq ||\L^1 (\bbc)-\L^1 (\bbd)||,$$
\item $\L^1 - \L^2$ is Lipschitz with constant $\alpha_2>0$ uniformly with respect to $\Delta$, i.e., for any $\bbc,\bbd$
$$
||(\L^1(\bbc)-\L^2(\bbc))-(\L^1(\bbd)-\L^2(\bbd))||\leq \alpha_2 \Delta ||\bbc-\bbd||.
$$
\end{itemize}
We also assume that there exists a unique $\bbc^*_\Delta$ such that $\L^2(\bbc^*_\Delta)=0$. Then, if $\eta:=\frac{\alpha_2}{\alpha_1}\Delta<1$, the DeC is converging to $\bbc^*$ and after $k$ iterations the error $||\bbc^{(k)}-\bbc^*||$ is smaller than $\eta^k||\bbc^{(0)}-\bbc^{*}||$.

\end{proposition}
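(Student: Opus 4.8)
The plan is to recognise the DeC recursion \eqref{DeC_method} as a fixed-point iteration that contracts toward the exact discrete solution $\bbc^* := \bbc^*_\Delta$ of $\L^2(\bbc^*)=0$, and to extract a single contraction factor $\eta$ per correction step; the stated bound then follows by a one-line induction.

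First I would rewrite the defining relation so that the error $\bbc^{(k)} - \bbc^*$ appears on the left and a Lipschitz-controllable difference appears on the right. The key observation is that, since $\L^2(\bbc^*) = 0$, we trivially have $\L^1(\bbc^*) = \L^1(\bbc^*) - \L^2(\bbc^*)$. Subtracting this from the update $\L^1(\bbc^{(k)}) = \L^1(\bbc^{(k-1)}) - \L^2(\bbc^{(k-1)})$ yields
\begin{equation*}
\L^1(\bbc^{(k)}) - \L^1(\bbc^*) = \bigl(\L^1(\bbc^{(k-1)}) - \L^2(\bbc^{(k-1)})\bigr) - \bigl(\L^1(\bbc^*) - \L^2(\bbc^*)\bigr).
\end{equation*}
This is the crucial algebraic step: the same defect operator $\L^1 - \L^2$ is evaluated at $\bbc^{(k-1)}$ and at $\bbc^*$ on the right, while only the easily invertible $\L^1$ acts on the current iterate on the left.

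Next I would apply the two hypotheses directly. Coercivity of $\L^1$ bounds the left-hand side from below by $\alpha_1 \| \bbc^{(k)} - \bbc^* \|$, while the uniform Lipschitz property of $\L^1 - \L^2$ bounds the right-hand side from above by $\alpha_2 \Delta \| \bbc^{(k-1)} - \bbc^* \|$. Combining the two inequalities gives
\begin{equation*}
\alpha_1 \| \bbc^{(k)} - \bbc^* \| \leq \alpha_2 \Delta \| \bbc^{(k-1)} - \bbc^* \|,
\end{equation*}
hence $\| \bbc^{(k)} - \bbc^* \| \leq \eta \, \| \bbc^{(k-1)} - \bbc^* \|$ with $\eta = \tfrac{\alpha_2}{\alpha_1}\Delta$. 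Iterating this bound from $k$ down to $0$ produces $\| \bbc^{(k)} - \bbc^* \| \leq \eta^k \| \bbc^{(0)} - \bbc^* \|$, and the assumption $\eta < 1$ forces convergence as $k \to \infty$.

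I do not expect a genuine obstacle in this abstract argument, since it is essentially the Banach contraction estimate; the only points requiring a word of care are that the recursion is well posed (each $\bbc^{(k)}$ is uniquely determined, which follows from the injectivity implicit in the coercivity of $\L^1$) and that the contraction constant $\eta$ stays independent of $\Delta$ precisely because the Lipschitz bound for $\L^1 - \L^2$ carries the explicit factor $\Delta$. The real work — which lies outside this proposition — is to verify that the concrete DeC operators \eqref{eq:L1}--\eqref{eq:L2} actually satisfy coercivity and the $\Delta$-scaled Lipschitz hypothesis, and that is where I would expect to spend effort when instantiating the result.
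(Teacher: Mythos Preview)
Your argument is correct and is exactly the standard contraction estimate: subtract $\L^1(\bbc^*)=\L^1(\bbc^*)-\L^2(\bbc^*)$ from the update, apply coercivity on the left and the $\Delta$-scaled Lipschitz bound on the right, and iterate. The paper does not actually supply a proof of this proposition (it is quoted from \cite{abgrall2017dec}), but the very same chain of inequalities you wrote is reproduced later in the paper as \eqref{eq:coercivity}--\eqref{eq:lipschitz} when the authors adapt the argument to the modified operators, so your approach coincides with theirs.
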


\begin{remark}
The DeC procedure is naturally conservative if $\L^1$ is conservative, but it is not positivity preserving if $\L^1$ is positivity preserving. Indeed, the coefficients $\theta_r^m$ of the operator $\L^2$ can be negative and spoil the positivity of the scheme. This is one of the points that make us modify the classical DeC into the scheme that we propose in this work.
\end{remark}

\begin{remark}
Any DeC scheme can be interpreted as a RK scheme \cite{christlieb2010integral}. The main difference between RK and  DeC is that the latter gives a general approach to the time discretization and does not require a specification of the coefficients for every order of accuracy. 
On the other side, by rewriting a DeC method as a RK scheme, it
requires a number of stages equal to $K\times M=d\times(d-1)$, which is bigger than classical RK stages. However,  one can notice that every subtimestep is independent of another, so one can compute sequentially the corrections and in parallel the subtimesteps, obtaining a computational cost of just $K=d$ corrections.

\end{remark}

\begin{ex}\label{example}
For clarity, we provide here an example of a second order DeC scheme. To get this order of accuracy, we need $K=2$ DeC iterations and one subtimestep $[t^n=t^{n,0}, t^{n,1}=t^{n+1}]$.
 Reminding that $\bc^{0,(k)}=\bc(t^n)\, \forall k$, the method \eqref{DeC_method} for the first step reads

 \begin{equation*}
 \begin{split}
  &\L^1(\bbc^{(1)})\stackrel{!}{=} \L^1(\bbc^{(0)})-\L^{2}(\bbc^{(0)})\\
 \Longleftrightarrow  &c^{1,(1)}_i-c^{0,(0)}_i -\Delta t \beta^1 E_{i,j}( \bc^{0,(1)})=\\
 &c^{1,(0)}_i-c^{0,(0)}_i -\Delta t \beta^1 E_{i}( \bc^{0,(0)}) \\
 -&c^{1,(0)}_i+c^{0,(0)}_i +\Delta t \sum_{r=0}^M \theta_r^1  E_{i}( \bc^{r,(0)})\\
 \Longleftrightarrow & c^{1,(1)}_i= c^{0,(0)}_i +\Delta t E_{i}( \bc^{0,(0)}) = c^{0,(0)}_i +\Delta t \sum_{j=1}^I \left( p_{i,j}( \bc^{0,(0)}) - d_{i,j}( \bc^{0,(0)}) \right)
 \end{split}
 \end{equation*}
Substituting this term into the first correction steps leads finally to
\begin{equation*}
 \begin{split}
  &\L^1(\bc^{(2)})=\L^1(\bc^{(1)})-\L^2(\bc^{(1)})\\
  \Longleftrightarrow&
  c^{1,(2)}_i-c^{0,(2)}_i -\Delta t E_{i}( \bc^{0,(2)})\\
 &= c^{1,(1)}_i-c^{0,(1)}_i -\Delta t E_{i}( \bc^{0,(1)})\\ 
 &-c^{1,(1)}_i+c^{0,(1)}_i+\sum_{r=0}^1 \theta_r^1 \Delta t 
 E_{i}( \bc^{r,(1)})
 \end{split}
\end{equation*}
The correction step is not modifying the initial subtimestep. Therefore,  with $\bc^{0,(1)}=\bc^{0,(2)}$, we get
\begin{equation*}
 c_i^{n+1}=c^{1,(2)}_i=c^{0,(0)}_i+ \sum_{r=0}^1 \theta_r^1 \Delta t \sum_{j=1}^I\left(
  p_{i,j}( \bc^{r,(1)}) - d_{i,j}( \bc^{r,(1)}) \right)
\end{equation*}
where $\theta_0^1=\theta_1^1=\frac{1}{2}$. 
This scheme  coincides with the strong stability preserving Runge-Kutta method of 
second order \cite{gottlieb2011strong}. 

\end{ex}

\begin{remark}
Before we  modify our DeC framework, we want to give some final remarks.\\ 
The presented  DeC approach is not the most general version. 
In our description we always include both endpoints in  the point distribution of the subtimesteps, i.e., $t^0=t^n$ and $t^M=t^{n+1}$.
However, this is not necessary, as it is already described in \cite{dutt2000dec}, where 
also Gauss-Legendre nodes are applied. Then, the approximation at the endpoint 
is done via extrapolation. Nevertheless, we do not consider in this work this class of point distribution.\\
Secondly, instead of using the explicit Euler method in  $\L^1$, explicit high-order RK methods can also be applied. In principle, this yields a faster increase of the order of accuracy in the iterative procedure, but it has been shown, that it leads also to some problems of smoothness of the error behaviour as it is described  in \cite{christlieb2010integral}, which results in a drop down of the expected accuracy order. 
However, we will consider this approach in future research. 
\end{remark}

\section{Modified Patankar Deferred Correction Scheme}\label{sec:mPDeC}

In this section, we are going to propose a positivity preserving, conservative and  
arbitrary high-order scheme, 
that will be denoted as  \textbf{modified Patankar Deferred Correction (mPDeC)}. \\
The DeC procedure \eqref{DeC_method}
serves us as a starting point to construct this scheme,
and, thanks to its structure, we will be able to prove the hypotheses of 
Proposition \ref{DeC_prop}.
This yield us directly the desired order condition for our modified DeC scheme 
without performing a specific Taylor expansion for every order of accuracy.
We will adapt DeC in such a way to obtain all the properties we are interested in.\\
The conservation can be easily guaranteed by the consistency of the two operators,
i.e., by the consistency of the two schemes described by  $\L^1$ and by $\L^2$. \\
Conversely, more effort is required to produce a positivity preserving scheme. For this purpose, we follow 
the ideas of Patankar \cite{patankar1980numerical} and Burchard et al. 
\cite{burchard2003high} of weighting the destruction and production terms 
in the scheme. Their aim is to obtain a mass matrix shaped as in the 
modified Patankar scheme \eqref{eq:mod_patankar}
where all of the positive terms are collected on the diagonal, while the negative 
terms are put in the non-diagonal entries. 
This will guarantee that the mass matrix is diagonally dominant by columns, with positive 
diagonal values, and, thus, its inverse will be positive. Therefore, we introduce some 
coefficients similar to the ones proposed in \eqref{eq:mod_patankar}.\\
Finally, as we have seen in the example \eqref{Ex:Euler_}, an explicit scheme is not positivity preserving
and the investigation in  \cite{burchard2003high,kopecz2019existence,huang2019positivity}
support our decision to modify the DeC scheme in order to get a \emph{fully} implicit 
method. \\
Because of all the above mentioned considerations, we came to the conclusion 
of modifying the $\L^2$ operator, to make it fully implicit. 
In particular, it has to depend on both the previous and the current corrections 
of the DeC procedure. We redefine it as follows.
\begin{equation}\label{eq:L2m}
\begin{split}
&\L^2(\bc^{0,(k-1)}, \dots, \bc^{M,(k-1)},\bc^{0,(k)}, \dots, \bc^{M,(k)}) =\L^2(\bbc^{(k-1)},\bbc^{(k)}):=\\
&\begin{cases}
c_i^{M,(k-1)}-c_i^{0,(k-1)} - \sum\limits_{r=0}^M \theta_r^M \Delta t \sum\limits_{j=1}^I \left(  p_{i,j}(\bc^{r,(k-1)}) \frac{c^{M,(k)}_{\gamma(j,i, \theta_r^M)}}{c_{\gamma(j,i, \theta_r^M)}^{M,(k-1)}} -  d_{i,j}(\bc^{r,(k-1)})  \frac{c^{M,(k)}_{\gamma(i,j, \theta_r^M)}}{c_{\gamma(i,j, \theta_r^M)}^{M,(k-1)}} \right), \forall i=1,\dots, I \\
\vdots\\
c_i^{1,(k-1)}-c_i^{0,(k-1)} - \sum\limits_{r=0}^M \theta_r^1 \Delta t \sum\limits_{j=1}^I
\left( p_{i,j}(\bc^{r,(k-1)}) \frac{c^{1,(k)}_{\gamma(j,i, \theta_r^1)}}{c_{\gamma(j,i, \theta_r^1)}^{1,(k-1)}} -  d_{i,j}(\bc^{r,(k-1)})  \frac{c^{1,(k)}_{\gamma(i,j, \theta_r^1)}}{c_{\gamma(i,j, \theta_r^1)}^{1,(k-1)}} \right), \forall i=1,\dots, I 
\end{cases},
\end{split}
\end{equation}
where $\gamma(a,b, \theta) = a$ if $\theta>0$ and $\gamma(a,b, \theta) = b$ if $\theta <0$.
\begin{remark}\label{L2_new_remark}
The modification of the scheme is done only through the coefficients
$\frac{c^{m,(k)}_{j}}{c^{m,(k-1)}_{j}}$ on both the production and the destruction terms.
The fact that these coefficients depend on the new correction $(k)$ means that we are modifying the
mass matrix of the whole DeC correction step. \\
These coefficients allow to choose in which term of the mass matrix we want to put each term 
$\theta_r^mp_{i,j}$ and $\theta_r^md_{i,j}$, according to the sign of the $\theta$ coefficient. 
The pseudo-algorithm \ref{algo:mass} provides the construction steps of the mass matrix, see  \ref{sec:algorithm}. There, it is straightforward to see that the diagonal terms are all positive and the off--diagonal are all negative.
The index $\gamma$ takes care of the sign of the destruction and production terms which 
are added in the mass matrix. It is inspired by the explanation given in  \cite[Remark 2.5]{kopecz2018order}, that states that,
when negative entries in the Butcher Tableau of the RK scheme appear, one has
to interchange the destruction terms with the production ones to guarantee the positivity preserving property. 
With the $\gamma$ function we are taking this into account.
In our opinion, it is complicated and unclear to investigate higher order $(>3)$ RK schemes properties because of these exchanges depending on the Butcher Tableau. While, with this DeC approach, we can in few lines generalize every order scheme.\\
Moreover, it is helpful to notice that the coefficients that we are using to modify the contributions, namely $\frac{c^{m,(k)}_{j}}{c^{m,(k-1)}_{j}}$, are
 converging to 1 as the iteration index of the DeC increases. 
In subsection \ref{sec:order_DeC} we will make this statement more precise and we will 
study how fast these coefficients converge to 1.
\end{remark}

Most of the terms in the $\L^1$ operator will cancel out through the iteration process,
therefore we keep the $\L^1$  operator as presented 
in the original DeC \eqref{eq:L1}. 

\begin{equation}\label{eq:L1_mPDeC}
\begin{split}
&\L^1(\bc^{0,(k)}, \dots, \bc^{M,(k)}) =\\
&\begin{cases}
c_i^{M,(k)}-c_i^{0,(k)} -  \beta^M\Delta t \left( \sum\limits_{j=1}^I p_{i,j}(\bc^{0,(k)})
 - \sum\limits_{j=1}^I d_{i,j}(\bc^{0,(k)})  \right),
\forall i=1,\dots, I \\
\vdots\\
c_i^{1,(k)}-c_i^{0,(k)} - \beta^1  \Delta t  \left( \sum\limits_{j=1}^I p_{i,j}(\bc^{0,(k)}) 
 - \sum\limits_{j=1}^I d_{i,j}(\bc^{0,(k)})   \right), 
\forall i=1,\dots, I 
\end{cases}.
\end{split}
\end{equation}
Now, we propose the modified Patankar DeC scheme as follows.\\

 \centerline{\textbf{mPDeC Algorithm}}
\begin{equation}\label{DeC_method_new}
\begin{split}
&\bc^{0,(k)}:=\bc(t^n), \quad k=0,\dots, K,\\
&\bc^{m,(0)}:=\bc(t^n),\quad m=1,\dots, M\\
&\L^1(\bbc^{(k)})=\L^1(\bbc^{(k-1)})-\L^2(\bbc^{(k-1)},\bbc^{(k)}) \text{ with }k=1,\dots,K.
\end{split}
\end{equation}

%

One can notice that, using the fact that initial states $c_i^{0,(k)}$
are identical for any correction $(k)$, the DeC correction step \eqref{DeC_method_new}
can be rewritten for $k=1,\dots,K$, $m =1,\dots, M$ and $\forall i\in I$ into
\begin{equation}\label{eq:explicit_dec_correction}
c_i^{m,(k)}-c^0_i -\sum_{r=0}^M \theta_r^m \Delta t  \sum_{j=1}^I 
\left( p_{i,j}(\bc^{r,(k-1)}) 
\frac{c^{m,(k)}_{\gamma(j,i, \theta_r^m)}}{c_{\gamma(j,i, \theta_r^m)}^{m,(k-1)}}
- d_{i,j}(\bc^{r,(k-1)})  \frac{c^{m,(k)}_{\gamma(i,j, \theta_r^m)}}{c_{\gamma(i,j, \theta_r^m)}^{m,(k-1)}} \right)=0.
\end{equation}
We keep both formulations \eqref{DeC_method_new} and \eqref{eq:explicit_dec_correction}
to prove different properties. The DeC formulation \eqref{DeC_method_new}
will help us to demonstrate the accuracy order of the scheme whereas
 formulation \eqref{eq:explicit_dec_correction} will be used to prove conservation and positivity.
Before we start to prove these properties, we give a small example 
 to get used to the formulation \eqref{DeC_method_new}.
 Furthermore, we like to mention that although the algorithm 
 \eqref{DeC_method_new} seems quite complex, it is actually easy to implement. We
 put a small pseudo-code in the  \ref{sec:algorithm} and we refer to the repository \footnote{\url{https://git.math.uzh.ch/abgrall_group/deferred-correction-patankar-scheme}} for a Julia version of the code.

\begin{ex}\label{example_2}
 We give a small example of the constructed method, applying the DeC approach at
 second order of accuracy, as already considered in example \ref{example}, i.e., $K=2$ DeC iterations and  one subtimestep $[t^n=t^{n,0}, t^{n,1}=t^{n+1}]$. 
 In this case, we recall that $\theta_{0}^1=\theta_{1}^1=\frac{1}{2}$ and that $\bc^{0,(0)} = \bc^{1,(0)}$.

 The method \eqref{DeC_method_new} for the first step reads
 \begin{equation*}
 \begin{split}
  & \L^1(\bbc^{(1)})-\L^1(\bbc^{(0)})+\L^2(\bbc^{(0)},\bbc^{(1)})\stackrel{!}{=}0 \\
 \Longleftrightarrow  & c^{1,(1)}_i -c^{0,(1)}_i- \Delta t \sum_{j=1}^I \left( p_{i,j}( \bc^{0,(1)}) - d_{i,j}( \bc^{0,(1)}) \right)= \\
 & c^{1,(0)}_i -c^{0,(0)}_i- \Delta t \sum_{j=1}^I \left( p_{i,j}( \bc^{0,(0)}) - d_{i,j}( \bc^{0,(0)}) \right) \\
 -& c^{1,(0)}+c^{0,(0)}_i +\Delta t \sum_{r=0}^1 \theta^1_r \sum_{j=1}^I \left( p_{i,j}( \bc^{r,(0)})
 \frac{c_j^{1,(1)}}{c_j^{1,(0)}}-  d_{i,j}( \bc^{r,(0)})
 \frac{c_i^{1,(1)}}{c_i^{1,(0)}} \right)\\
 \Longleftrightarrow  & c^{1,(1)}_i =c^{0,(0)}_i +\Delta t \sum_{j=1}^I \left( p_{i,j}( \bc^{0,(0)})
 \frac{c_j^{1,(1)}}{c_j^{1,(0)}}-  d_{i,j}( \bc^{0,(0)})
 \frac{c_i^{1,(1)}}{c_i^{1,(0)}} \right),\\ 
 \end{split}
 \end{equation*}
 where the last step is obtained considering, again the fact that for the iteration $(0)$ all the states coincide. Collecting the mass matrix terms as in \eqref{eq:mass_matrix_mod_patankar}, one can solve the previous equation for $\bc^{1,(1)}$. Substituting this term into the second iteration step leads finally to
\begin{equation*}
 \begin{split}
  &\L^1(\bbc^{(2)})=\L^1(\bbc^{(1)})-\L^2(\bbc^{(1)},\bbc^{(2)})\\
  \Longleftrightarrow&
  c^{1,(2)}_i-c^{0,(2)}_i -\Delta t \sum_{j=1}^I p_{i,j}( \bc^{0,(2)})
 + \sum_{j=1}^I d_{i,j}( \bc^{0,(2)})
 \\
 &= c^{1,(1)}_i-c^{0,(1)}_i -\Delta t \sum_{j=1}^I p_{i,j}( \bc^{0,(1)})
 + \sum_{j=1}^I d_{i,j}( \bc^{0,(1)}) \\ 
 &-c^{1,(1)}_i+c^{0,(1)}_i+\sum_{r=0}^1 \theta_r^1 \Delta t \left(
 \sum_{j=1}^I p_{i,j}( \bc^{r,(1)})
 \frac{c_j^{1,(2)}}{c_j^{1,(1)}} -\sum_{j=1}^I d_{i,j}( \bc^{r,(1)})
 \frac{c_i^{1,(2)}}{c_i^{1,(1)}}\right).
 \end{split}
\end{equation*}
The correction step has no effect on the initial subtimestep. Therefore, we get with $\bc^{0,(1)}=\bc^{0,(2)}$:
\begin{equation*}
 c_i^{n+1}=c^{1,(2)}_i=c^{0,(0)}_i+ \sum_{r=0}^1 \theta_r^1 \Delta t \left(
 \sum_{j=1}^I p_{i,j}( \bc^{r,(1)})
 \frac{c_j^{1,(2)}}{c_j^{1,(1)}} -\sum_{j=1}^I d_{i,j}( \bc^{r,(1)})
 \frac{c_i^{1,(2)}}{c_i^{1,(1)}}\right)
\end{equation*}
where $\theta_0^1=\theta_1^1=\frac{1}{2}$. 
This scheme  coincides with a modified Runge Kutta Patankar scheme of second order as it is presented in \cite{kopecz2018order} .

\end{ex}
\subsection{Conservation and positivity of modified Patankar DeC}
In this section, we are proving that the proposed scheme is unconditionally conservative and positivity preserving.
\begin{theorem}
The  mPDeC scheme in \eqref{eq:explicit_dec_correction} is unconditionally conservative for all substages, i.e., $$\sum_{i=1}^I c^{m,(k)}_i=\sum_{i=1}^I c^{0}_i,$$ for all $k=1,\dots, K$ and $m=0,\dots,M$.
\end{theorem}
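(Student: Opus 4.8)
The plan is to sum the defining relation \eqref{eq:explicit_dec_correction} over the constituent index $i$ and to show that the entire production--destruction contribution cancels, leaving only $\sum_i c_i^{m,(k)} = \sum_i c_i^0$. The case $m=0$ is immediate, since the initialization in \eqref{DeC_method_new} forces $\bc^{0,(k)} = \bc^0$ for every $k$, so I would only need to treat $m \geq 1$. I also expect that no induction on $k$ is required: the claimed identity should follow term by term in $r$ from a single algebraic cancellation, valid for each fixed $m$, $k$ and $r$, because the right-hand side only involves $c_i^0$ plus a production--destruction sum that vanishes regardless of the (implicitly defined) values $c^{m,(k)}$.

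Concretely, summing \eqref{eq:explicit_dec_correction} over $i$ yields
\begin{equation*}
\sum_{i=1}^I c_i^{m,(k)} = \sum_{i=1}^I c_i^0 + \sum_{r=0}^M \theta_r^m \Delta t \, S_r, \qquad
S_r := \sum_{i=1}^I \sum_{j=1}^I \left( p_{i,j}(\bc^{r,(k-1)}) \frac{c^{m,(k)}_{\gamma(j,i, \theta_r^m)}}{c_{\gamma(j,i, \theta_r^m)}^{m,(k-1)}} - d_{i,j}(\bc^{r,(k-1)}) \frac{c^{m,(k)}_{\gamma(i,j, \theta_r^m)}}{c_{\gamma(i,j, \theta_r^m)}^{m,(k-1)}} \right),
\end{equation*}
so the whole task reduces to proving $S_r = 0$ for every $r$. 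First I would use the conservation relation \eqref{eq:conservation_matrix}, i.e. $p_{i,j}(\bc) = d_{j,i}(\bc)$, and relabel the dummy indices $i \leftrightarrow j$ in the destruction part of $S_r$. This turns that contribution into $\sum_{i,j} d_{j,i}(\bc^{r,(k-1)}) \, c^{m,(k)}_{\gamma(j,i,\theta_r^m)} / c^{m,(k-1)}_{\gamma(j,i,\theta_r^m)}$, which by $d_{j,i} = p_{i,j}$ is exactly the production part of $S_r$; hence the two cancel and $S_r = 0$.

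The hard part will be the index bookkeeping inside $\gamma$: I must check that the weighting fraction attached to $p_{i,j}$ coincides with the one attached to $d_{j,i}$ after the swap, independently of the sign of $\theta_r^m$. This is precisely the point of the construction, since the production term is weighted through $\gamma(j,i,\theta_r^m)$ and the destruction term through $\gamma(i,j,\theta_r^m)$, so that relabeling $i \leftrightarrow j$ in the destruction sum produces the index $\gamma(j,i,\theta_r^m)$, matching the production sum for both $\theta_r^m > 0$ and $\theta_r^m < 0$. Once this match is confirmed, the cancellation is exact, $S_r = 0$, and conservation $\sum_{i=1}^I c_i^{m,(k)} = \sum_{i=1}^I c_i^0$ follows directly from the displayed identity for all $k=1,\dots,K$ and $m=0,\dots,M$.
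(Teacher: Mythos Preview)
Your proposal is correct and follows essentially the same route as the paper: sum \eqref{eq:explicit_dec_correction} over $i$, invoke the conservation relation $p_{i,j}=d_{j,i}$, and swap the dummy indices $i\leftrightarrow j$ in one of the two sums so that the production and destruction contributions cancel. Your write-up is in fact slightly more explicit than the paper's, in that you isolate each $S_r$ and verify carefully that the $\gamma$ indices line up after the swap regardless of the sign of $\theta_r^m$, and you handle $m=0$ separately via the initialization; these are cosmetic refinements of the same argument.
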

\begin{proof}
Using formulation \eqref{eq:explicit_dec_correction}, we can easily see that $\forall k,m$ 
\begin{align}
&\sum_{i\in I} c_i^{m,(k)} - \sum_{i\in I} c_i^{0} = \\ \label{eq:cons_step_def}
=&\Delta t \sum_{i,j=1}^I \sum_{r=0}^M \theta_r^m\left(
 p_{i,j}(\bc^{r,(k-1)}) \frac{c^{m,(k)}_{\gamma(j,i, \theta_r^m)}}{c_{\gamma(j,i, \theta_r^m)}^{m,(k-1)}} - d_{i,j}(\bc^{r,(k-1)})  \frac{c^{m,(k)}_{\gamma(i,j, \theta_r^m)}}{c_{\gamma(i,j, \theta_r^m)}^{m,(k-1)}} 
 \right) =\\ \label{eq:cons_step_prod}
 =&\Delta t \sum_{i,j=1}^I \sum_{r=0}^M\theta_r^m \left(
 d_{j,i}(\bc^{r,(k-1)}) \frac{c^{m,(k)}_{\gamma(j,i, \theta_r^m)}}{c_{\gamma(j,i, \theta_r^m)}^{m,(k-1)}} - d_{i,j}(\bc^{r,(k-1)})  \frac{c^{m,(k)}_{\gamma(i,j, \theta_r^m)}}{c_{\gamma(i,j, \theta_r^m)}^{m,(k-1)}} 
 \right) =\\
 =&\Delta t \sum_{r=0}^M \theta_r^m\left(
 \sum_{i,j=1}^I d_{j,i}(\bc^{r,(k-1)}) \frac{c^{m,(k)}_{\gamma(j,i, \theta_r^m)}}{c_{\gamma(j,i, \theta_r^m)}^{m,(k-1)}} 
 -\sum_{i,j=1}^I d_{i,j}(\bc^{r,(k-1)})  \frac{c^{m,(k)}_{\gamma(i,j, \theta_r^m)}}{c_{\gamma(i,j, \theta_r^m)}^{m,(k-1)}} 
 \right) =0.
\end{align}
To get this result,
we have just used the definition of the scheme 
\eqref{eq:explicit_dec_correction} in \eqref{eq:cons_step_def} and 
the property \eqref{eq:conservation_matrix} of the production and destruction operators $d_{i,j}=p_{j,i}$ 
in \eqref{eq:cons_step_prod}. In the last step, we have exchanged the sums over $j$ and $i$.

\end{proof}

To demonstrate the positivity of the scheme, we introduce some preliminary results.
\begin{lemma}\label{th:diagonal_dominant}
The mass matrix of every correction step of the mPDeC scheme described in \eqref{eq:explicit_dec_correction} is diagonal dominant by columns.
\end{lemma}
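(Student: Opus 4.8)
The plan is to fix the subtimestep index $m$ and the correction index $k$ and to read \eqref{eq:explicit_dec_correction} as a linear system $\M\,\bc^{m,(k)}=\bc^{0}$ for the unknown vector $\bc^{m,(k)}=(c_1^{m,(k)},\dots,c_I^{m,(k)})^T$. Every quantity carrying the superscript $(k-1)$ --- the rates $p_{i,j}(\bc^{r,(k-1)})$, $d_{i,j}(\bc^{r,(k-1)})$ and the denominators $c_\ell^{m,(k-1)}$, which are strictly positive because the previous iterate $\bc^{m,(k-1)}$ is positive --- is a known coefficient, so the factors $\frac{c_\ell^{m,(k)}}{c_\ell^{m,(k-1)}}$ are precisely the device that routes each summand into a chosen entry of $\M$. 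I abbreviate $p_{i,j}^r:=p_{i,j}(\bc^{r,(k-1)})$ and $d_{i,j}^r:=d_{i,j}(\bc^{r,(k-1)})$.

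First I would split the sum over $r$ into the indices with $\theta_r^m>0$ and those with $\theta_r^m<0$, and apply the definition $\gamma(a,b,\theta)=a$ for $\theta>0$ and $\gamma(a,b,\theta)=b$ for $\theta<0$ to decide, case by case, where a term lands. When $\theta_r^m>0$ the production term sits in the column-$j$ slot of row $i$ while the destruction term sits on the diagonal $(i,i)$; when $\theta_r^m<0$ the two roles swap. Tracking the external minus sign of $-\sum_r\theta_r^m\Delta t(\cdots)$ together with the internal $-d_{i,j}$, I obtain, in the fully conservative normalization $p_{i,i}=d_{i,i}=0$,
\[
m_{i,i}=1+\Delta t\!\!\sum_{r:\,\theta_r^m>0}\!\!\theta_r^m\sum_{j=1}^I\frac{d_{i,j}^r}{c_i^{m,(k-1)}}+\Delta t\!\!\sum_{r:\,\theta_r^m<0}\!\!(-\theta_r^m)\sum_{j=1}^I\frac{p_{i,j}^r}{c_i^{m,(k-1)}},
\]
\[
m_{i,\ell}=-\Delta t\!\!\sum_{r:\,\theta_r^m>0}\!\!\theta_r^m\,\frac{p_{i,\ell}^r}{c_\ell^{m,(k-1)}}-\Delta t\!\!\sum_{r:\,\theta_r^m<0}\!\!(-\theta_r^m)\,\frac{d_{i,\ell}^r}{c_\ell^{m,(k-1)}}\quad(\ell\neq i).
\]
Since $\theta_r^m$, $-\theta_r^m$, the rates and the denominators are all non-negative, every diagonal entry satisfies $m_{i,i}\geq 1>0$ and every off-diagonal entry satisfies $m_{i,\ell}\leq 0$.

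The final step bounds the off-diagonal column mass by the diagonal, and the key ingredient is the conservation relation \eqref{eq:conservation_matrix}, $p_{i,j}=d_{j,i}$. Summing $m_{i,\ell}$ over $i$, in the $\theta_r^m>0$ block the diagonal contribution $\sum_j d_{\ell,j}^r$ cancels the off-diagonal contribution $\sum_{i\neq\ell}p_{i,\ell}^r=\sum_{i\neq\ell}d_{\ell,i}^r$, and symmetrically in the $\theta_r^m<0$ block $\sum_j p_{\ell,j}^r$ cancels $\sum_{i\neq\ell}d_{i,\ell}^r=\sum_{i\neq\ell}p_{\ell,i}^r$; only the leading $1$ survives, so $\sum_{i=1}^I m_{i,\ell}=1$ for every column $\ell$. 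Combined with $m_{i,\ell}\leq 0$ for $i\neq\ell$ this gives $\sum_{i\neq\ell}|m_{i,\ell}|=-\sum_{i\neq\ell}m_{i,\ell}=m_{\ell,\ell}-1<m_{\ell,\ell}=|m_{\ell,\ell}|$, which is exactly strict diagonal dominance by columns.

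The only real difficulty is the sign-and-placement bookkeeping forced by $\gamma$: one must be meticulous about which of the four cases (production or destruction, $\theta_r^m>0$ or $\theta_r^m<0$) sends a term to the diagonal versus off the diagonal, and about the interplay of the external and internal minus signs, since a single slip would destroy either the sign pattern or the column-sum cancellation. It is worth noting that the identity $\sum_i m_{i,\ell}=1$ is merely the matrix form of conservation, $\mathbf{1}^T\M=\mathbf{1}^T$, which is why the conservation hypothesis is precisely what powers this final step.
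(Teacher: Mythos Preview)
Your proof is correct and follows essentially the same route as the paper: write out the mass matrix entries, observe that the $\gamma$-mechanism places only nonnegative contributions on the diagonal and nonpositive ones off the diagonal, and then use the conservation relation $p_{i,j}=d_{j,i}$ to match the off-diagonal column mass against the diagonal. Your final step is organized slightly more cleanly than the paper's --- you compute the exact column sum $\sum_i m_{i,\ell}=1$ and read off strict dominance from $m_{\ell,\ell}-1=\sum_{i\neq\ell}|m_{i,\ell}|$, noting that this identity is just $\mathbf{1}^T\M=\mathbf{1}^T$, i.e.\ discrete conservation --- whereas the paper bounds $\M_{ii}>-\sum_{j\neq i}\M_{ji}$ directly; but this is a cosmetic difference, not a different argument.
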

\begin{proof}
At each step $(m,k)$ we are solving an implicit linear system where the mass matrix is given by
\begin{equation}\label{eq:mass_matrix}
\M(\bc^{m,(k-1)})_{ij}= \begin{cases}
1+\Delta t \sum\limits_{r=0}^M \sum\limits_{l=1}^I \frac{\theta_r^m}{c_i^{m,(k-1)}} \left( d_{i,l}(\bc^{r,(k-1)})\mathbbm{1}_{\lbrace \theta^m_r>0\rbrace} -p_{i,l} (\bc^{r,(k-1)})\mathbbm{1}_{\lbrace \theta^m_r<0 \rbrace} \right) \quad &\text{for }i=j\\
-\Delta t \sum\limits_{r=0}^M \frac{\theta_r^m}{c_j^{m,(k-1)}} \left( p_{i,j}(\bc^{r,(k-1)})\mathbbm{1}_{\lbrace \theta^m_r>0\rbrace} -d_{i,j} (\bc^{r,(k-1)})\mathbbm{1}_{\lbrace \theta^m_r<0\rbrace} \right) \quad &\text{for }i\ne j
\end{cases}.
\end{equation}
Under the assumption
that $p_{i,j}$ and $d_{i,j}$ are always positive, 
it is straightforward to see that all the terms of the 
sum of $\M(\bc^{m,(k-1)})_{ii}$ are positive by construction and that 
all the terms of the sum of the non-diagonal terms $\M(\bc^{m,(k-1)})_{ij}$ for $i\neq j$ are
negative. Moreover, we can demonstrate that \begin{equation}
|\M(\bc^{m,(k-1)})_{ii}|=\M(\bc^{m,(k-1)})_{ii} > \sum_{j=1, j\ne i}^I- \M(\bc^{m,(k-1)})_{ji}  =\sum_{j=1, j\ne i}^I | \M(\bc^{m,(k-1)})_{ji}|,
\end{equation}
by showing
\begin{equation}
\begin{split}
\M(\bc^{m,(k-1)})_{ii} &= 1+\Delta t \sum\limits_{r=0}^M \sum\limits_{j=1}^{ I} \frac{\theta_r^m}{c_i^{m,(k-1)}} \left( d_{i,j}(\bc^{r,(k-1)})\mathbbm{1}_{\lbrace \theta^m_r>0\rbrace} -p_{i,j} (\bc^{r,(k-1)})\mathbbm{1}_{\lbrace \theta^m_r<0 \rbrace} \right) >\\
&>\Delta t \sum\limits_{r=0}^M \sum\limits_{j=1}^{I} \frac{\theta_r^m}{c_i^{m,(k-1)}} \left( p_{j,i}(\bc^{r,(k-1)})\mathbbm{1}_{\lbrace \theta^m_r>0\rbrace} -d_{j,i} (\bc^{r,(k-1)})\mathbbm{1}_{\lbrace \theta^m_r<0 \rbrace} \right) = \\
&=-\sum\limits_{j=1 ,  j\ne i}^I \M(\bc^{m,(k-1)})_{ji}=\sum_{j=1, j\ne i}^I| \M(\bc^{m,(k-1)})_{ji}|,
\end{split}
\end{equation}
where we have used the property of the $p$ and $d$ matrices 
to obtain the previous computation.  Finally, this proves that the mass matrix is diagonally dominant by columns.
\end{proof}

Using Lemma \ref{th:diagonal_dominant} we prove the following theorem.
\begin{theorem}
The mPDeC scheme defined in \eqref{eq:explicit_dec_correction} is positivity preserving, i.e., if $\bc^0>0$ then $\bc^{m,(k)}>0 $, for all $m=1,\dots, M$ and $k=1,\dots, K$.
\end{theorem}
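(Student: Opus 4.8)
The plan is to argue by induction on the correction index $k$, exploiting the fact that each substage of \eqref{eq:explicit_dec_correction} is the solution of a linear system whose matrix is controlled by Lemma \ref{th:diagonal_dominant}. The base case $k=0$ is immediate: by the initialization in \eqref{DeC_method_new} all substates coincide with $\bc^0 > 0$. For the inductive step I would assume $\bc^{r,(k-1)} > 0$ for every $r = 0, \dots, M$ and prove $\bc^{m,(k)} > 0$ for each $m = 1, \dots, M$.

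Fix $m$. The essential observation is that, once $\bbc^{(k-1)}$ is known, \eqref{eq:explicit_dec_correction} is \emph{linear} in the unknown $\bc^{m,(k)} \in \R^I$ and can be recast as $\M(\bc^{m,(k-1)})\,\bc^{m,(k)} = \bc^0$, with $\M$ the mass matrix \eqref{eq:mass_matrix} from Lemma \ref{th:diagonal_dominant}. The inductive hypothesis guarantees that the denominators $c^{m,(k-1)}_\ast$ occurring in \eqref{eq:explicit_dec_correction} are strictly positive, so $\M$ is well defined, and that the right-hand side $\bc^0$ is strictly positive.

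By Lemma \ref{th:diagonal_dominant}, $\M$ has strictly positive diagonal entries, non-positive off-diagonal entries, and is strictly diagonally dominant by columns. A matrix with this sign pattern that is strictly diagonally dominant is a nonsingular M-matrix, so its inverse is entrywise non-negative, $\M^{-1} \geq 0$. Moreover, writing $\M = s\IdxM - B$ with identity matrix $\IdxM$, scalar $s = \max_i \M_{ii} > 0$ and $B = s\IdxM - \M \geq 0$ (the off-diagonal sign condition gives $B \geq 0$, and the M-matrix property gives $\rho(B) < s$), the Neumann series $\M^{-1} = s^{-1}\sum_{\ell\geq 0}(B/s)^\ell$ shows that the diagonal of $\M^{-1}$ is bounded below by $s^{-1} > 0$. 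Hence, since $\bc^0 > 0$,
\begin{equation*}
c_i^{m,(k)} = \sum_{j=1}^I (\M^{-1})_{ij}\,c_j^0 \;\geq\; (\M^{-1})_{ii}\,c_i^0 \;\geq\; s^{-1} c_i^0 \;>\; 0,
\end{equation*}
which gives the required strict positivity. Ranging over $m$ completes the inductive step, and the induction on $k$ yields the theorem.

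The step I expect to be the main obstacle is upgrading non-negativity of the solution to \emph{strict} positivity: the M-matrix structure furnished by Lemma \ref{th:diagonal_dominant} yields $\M^{-1} \geq 0$ almost for free, but $\M^{-1} \geq 0$ together with a merely non-negative right-hand side would only give $\bc^{m,(k)} \geq 0$. The argument circumvents this by combining the strictly positive lower bound on the diagonal of $\M^{-1}$ with the strict positivity of $\bc^0$; a discrete minimum-principle argument applied to the smallest component of $\bc^{m,(k)}$ would be an equally valid alternative. A secondary point to verify carefully is that the denominators in \eqref{eq:explicit_dec_correction} are precisely the previous-correction substates $\bc^{m,(k-1)}$, so that the inductive hypothesis does make the linear solve well posed and keeps the system genuinely linear in $\bc^{m,(k)}$.
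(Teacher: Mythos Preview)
Your proof is correct and follows the same overall strategy as the paper: use the column diagonal dominance of $\M$ furnished by Lemma~\ref{th:diagonal_dominant} to show that $\M^{-1}$ is entrywise nonnegative, and then read off positivity of $\bc^{m,(k)}$ from the linear system $\M(\bc^{m,(k-1)})\bc^{m,(k)}=\bc^{0}$. The technical device differs slightly: the paper obtains $\M^{-1}\geq 0$ by running the Jacobi iteration $Z^{(s+1)}=(I-D^{-1}\M)Z^{(s)}+D^{-1}$, $Z^{(0)}=I$, observing that each $Z^{(s)}$ is nonnegative and that the iteration converges because $\rho(I-D^{-1}\M)<1$; you instead invoke the M-matrix framework and the Neumann series $\M^{-1}=s^{-1}\sum_{\ell\geq 0}(B/s)^{\ell}$ with $s=\max_i\M_{ii}$. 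These are cousins of one another and either is perfectly adequate here. Your write-up is actually more explicit than the paper on two points it leaves tacit: the induction on $k$ needed to ensure the denominators $c_j^{m,(k-1)}$ are strictly positive (so that $\M$ is well defined at every stage), and the passage from $\M^{-1}\geq 0$ to \emph{strict} positivity of $\bc^{m,(k)}$ via the lower bound $(\M^{-1})_{ii}\geq s^{-1}>0$ combined with $\bc^{0}>0$.
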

\begin{proof}
Using lemma \ref{th:diagonal_dominant},
we can prove that the inverse of any mass matrix 
obtained from the DeC iterations is positive, i.e., $(\M^{-1})_{ij}\geq 0,\, \forall i,j$. The proof follows the path of what was proposed in \cite{kopecz2018order}.
Using the Jacobi method, we can converge to $\M^{-1}$ with iterative matrices $Z^{(s)}$ for $s\in \N$, where 
\begin{equation}
Z^{(s+1)}:=(I-D^{-1} \M)Z^{(s)} + D^{-1} \text{, with }Z^{(0)}=I.
\end{equation}
Here, $I$ is the identity and $D$ is the diagonal of $\M$. If we denote the iteration matrix as $B:=I-D^{-1}\M$, we can see that it has spectral radius smaller than one, since $\M$ is diagonally dominant. This means that the Jacobi method is convergent to $\M^{-1}$. Now, since $B>0$ and $D^{-1}>0$ from previous lemma \ref{th:diagonal_dominant} and, by induction, also $Z^{(s)}$ is positive, 
we can say that $\M^{-1}=\lim\limits_{s\to \infty} Z^{(s)}$ will be positive.

\end{proof}

\subsection{Convergence order}\label{sec:order_DeC}
To prove that the solution of the mPDeC procedure is high-order accurate, 
we mimic the proof of the original DeC convergence as in \cite{abgrall2017dec}.
We denote by $\bbc^*$ the solution of the $\L^2$ operator, i.e., $\L^2(\bbc^*, \bbc^*)=0$.
This solution $\bbc^*$ coincides with the solution of the classical $\L^2$ operator 
defined in \eqref{eq:L2operator}. \\
We want to prove that for each iteration step the following 
inequalities are fulfilled: 
%
\begin{align}
||\bbc^{(k)}-\bbc^*||\leq & C_0|| \L^1(\bbc^{(k)}) - \L^1(\bbc^*) ||= \label{eq:coercivity}\\
= & C_0|| \L^1(\bbc^{(k-1)}) -\L^2(\bbc^{(k-1)}, \bbc^{(k)}) - \L^1(\bbc^*) +\L^2(\bbc^*, \bbc^*) ||\leq \label{eq:same_states}\\
\leq & C \Delta t || \bbc^{(k-1)}- \bbc^* || \label{eq:lipschitz}
\end{align}
which implies that for each iteration step we obtain one order of accuracy more than the previous iteration.  
After $K$ iterations we, finally, get 
\begin{equation}
||\bbc^{(K)}-\bbc^*||\leq C^K \Delta t^K ||\bbc^0 -\bbc^*||.
\end{equation}
To prove that  the inequalities \eqref{eq:coercivity} and \eqref{eq:lipschitz} are valid,
we have to demonstrate the following
\begin{enumerate}
\item the coercivity  of the operator $\L^1$ (as in the inequality \eqref{eq:coercivity})
\item the Lipschitz inequality for operator $\L^1 -\L^2$ used in \eqref{eq:lipschitz}
\item the high-order accuracy of the operator $\L^2$, i.e., $||\bbc^* -\bbc^{exact} ||\leq C_d\Delta t ^d$.
\end{enumerate}


Let us start with the coercivity lemma.

\begin{lemma}[Coercivity of $\L^1$]\label{th:coercivity}
Given any $\bbc^{(k)}, \bbc^{*} \in \R^{M\times I}$, there exists a positive $C_0$, such that, the operator $\L^1$ verifies 
\begin{equation}\label{eq:coercivity_teo}
||\L^1(\bbc^{(k)}) - \L^1(\bbc^{*})||\geq C_0||\bbc^{(k)}-\bbc^{(*)}||.
\end{equation}
\end{lemma}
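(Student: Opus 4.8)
The plan is to exploit the fact that, within the mPDeC algorithm \eqref{DeC_method_new}, the initial substate is frozen across every correction, so that $\bc^{0,(k)} = \bc^{0,*} = \bc(t^n)$ for all $k$. This makes $\L^1$ behave essentially like the identity on the internal stages, and the coercivity will follow with constant $C_0 = 1$. First I would write the difference $\L^1(\bbc^{(k)}) - \L^1(\bbc^*)$ block by block using the explicit definition \eqref{eq:L1_mPDeC}; for every $m = 1, \dots, M$ and every component $i$ this gives
\begin{equation*}
\left[\L^1(\bbc^{(k)}) - \L^1(\bbc^*)\right]_i^m = \left(c_i^{m,(k)} - c_i^{0,(k)} - \beta^m \Delta t\, E_i(\bc^{0,(k)})\right) - \left(c_i^{m,*} - c_i^{0,*} - \beta^m \Delta t\, E_i(\bc^{0,*})\right).
\end{equation*}

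Since the two initial substates coincide, both the $c_i^0$ contributions and the nonlinear $\beta^m \Delta t\, E_i(\bc^0)$ contributions cancel identically, leaving simply
\begin{equation*}
\left[\L^1(\bbc^{(k)}) - \L^1(\bbc^*)\right]_i^m = c_i^{m,(k)} - c_i^{m,*}, \qquad m = 1, \dots, M.
\end{equation*}
Taking the natural block norm on both sides, and observing that the $m=0$ block of $\bbc^{(k)} - \bbc^*$ vanishes for the same reason, I would conclude
\begin{equation*}
\|\L^1(\bbc^{(k)}) - \L^1(\bbc^*)\| = \left\|\left(c^{m,(k)} - c^{m,*}\right)_{m=1}^M\right\| = \|\bbc^{(k)} - \bbc^*\|,
\end{equation*}
which is exactly \eqref{eq:coercivity_teo} with $C_0 = 1$, in fact as an equality.

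There is no genuine analytical obstacle here; the only point that must be made explicit is that the initial data is identical in both arguments, which is precisely what removes the nonlinear term $E$ and the coupling to $\bc^0$. It is worth stressing that the resulting constant is independent of $\Delta t$, exactly as demanded by the coercivity hypothesis (the constant $\alpha_1$) of Proposition \ref{DeC_prop}. Were one instead to attempt the statement for genuinely arbitrary vectors with differing initial blocks, the residual term $\beta^m \Delta t\, (E(\bc^{0,(k)}) - E(\bc^{0,*}))$ would survive and would have to be absorbed through the Lipschitz continuity of $E$ for $\Delta t$ small enough; but this situation never occurs inside the DeC iteration, so the clean equality above suffices.
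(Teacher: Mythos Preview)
Your proof is correct and follows the same approach as the paper: you use that the initial substates $\bc^{0,(k)}=\bc^{0,*}$ coincide, so the nonlinear Euler contributions cancel and $\L^1(\bbc^{(k)})-\L^1(\bbc^*)=\bbc^{(k)}-\bbc^*$, giving $C_0=1$. Your write-up is in fact more explicit than the paper's, which states the cancellation in one line.
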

\begin{proof}

We remind  that the beginning states coincide for all the variables, 
i.e., $\bc^{0,(k)}=\bc^{0,*}=\bc^0$.
So, the evolution part simplifies in the two operators and we get the following relation
\begin{align}\label{eq:coercivity_1part}
&\L^1( \bbc^{(k)}) - \L^1(\bbc^{*})= (\bbc^{(k)}-\bbc^{(*)}). 
\end{align} 
This proves that with constant $C_0=1$ the equation \eqref{eq:coercivity_teo} holds.

\end{proof}

Before proving Lipschitz continuity, we need two lemmas. The first one proves that each stage of the scheme is a first order approximation of the previous timestep.
\begin{lemma}\label{th:taylor_1st_order}
For every subtimestep $m=1,\dots,M$ and correction $k=1,\dots, K$, there exists a 
matrix $G$, such that
\begin{equation}
\bc^{m,(k)}=\bc^0 + \Delta t G(\bc^{m,(k-1)}) \bc^0
\end{equation}
holds. Moreover, $G(\bc^{m,(k-1)})=W(\bc^{m,(k-1)})+\mathcal{O}(\Delta t)$, where $W$ does not depend on $\Delta t$. 
\end{lemma}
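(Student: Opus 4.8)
The plan is to read \eqref{eq:explicit_dec_correction}, for a fixed subtimestep $m$ and correction index $k$, as the linear system $\M\,\bc^{m,(k)} = \bc^0$ that is actually solved at each correction step, where $\M = \M(\bc^{m,(k-1)})$ is precisely the mass matrix written out componentwise in \eqref{eq:mass_matrix}. The crucial structural observation is that $\M$ differs from the identity by a term carrying a single explicit factor of $\Delta t$: writing $\M = I + \Delta t\,A$, the matrix $A = A(\bc^{m,(k-1)})$ collects all the weighted production and destruction ratios and, by construction, contains no further explicit $\Delta t$ dependence. Its entries are assembled only from the quadrature weights $\theta_r^m$, the Lipschitz functions $p_{i,j}, d_{i,j}$ evaluated at the previous correction $\bc^{r,(k-1)}$, and the reciprocals $1/c^{m,(k-1)}$, exactly as displayed in \eqref{eq:mass_matrix}.

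First I would invoke Lemma \ref{th:diagonal_dominant}: since $\M$ is diagonally dominant by columns with strictly positive diagonal, it is invertible, so $\bc^{m,(k)} = \M^{-1}\bc^0 = (I + \Delta t\,A)^{-1}\bc^0$ is well defined. I then define the matrix $G = G(\bc^{m,(k-1)})$ through the identity $(I + \Delta t\,A)^{-1} = I + \Delta t\,G$, equivalently $G = -A\,(I+\Delta t\,A)^{-1}$. Substituting yields at once $\bc^{m,(k)} = \bc^0 + \Delta t\,G(\bc^{m,(k-1)})\,\bc^0$, which is the first assertion of the lemma.

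For the second assertion I would expand $G$ in a Neumann series. Provided $\Delta t$ is small enough that $\|\Delta t\,A\| < 1$, one has $(I+\Delta t\,A)^{-1} = \sum_{\ell\geq 0} (-\Delta t\,A)^\ell$, so that $G = -A + \Delta t\,A^2 - \dots = -A + \mathcal{O}(\Delta t)$. Setting $W(\bc^{m,(k-1)}) := -A(\bc^{m,(k-1)})$ gives the claim, since $A$ carries no explicit factor of $\Delta t$ and is therefore the sought $\Delta t$-independent leading part. The main obstacle is purely quantitative: to justify that the remainder is genuinely $\mathcal{O}(\Delta t)$ and that the Neumann series converges, I must bound $\|A\|$ uniformly in $\Delta t$. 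This in turn requires the denominators $c^{m,(k-1)}$ to stay bounded away from zero, which follows from the positivity already established together with the fact that, for $\Delta t$ small, all substages remain in a neighbourhood of the positive initial datum $\bc^0$, on which the Lipschitz terms $p_{i,j}, d_{i,j}$ are bounded. With such a uniform bound in hand, the $\mathcal{O}(\Delta t)$ estimate is immediate and the proof closes.
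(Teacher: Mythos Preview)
Your argument is correct and is essentially the paper's own proof: both write the mass matrix \eqref{eq:mass_matrix} as $\M = I - \Delta t\,W$ (your $A$ equals the paper's $-W$, defined explicitly in \eqref{eq:W_matrix}), set $G := \tfrac{1}{\Delta t}(\M^{-1}-I)$, and expand to first order in $\Delta t$. Your discussion of the uniform bound on $A$ needed for the Neumann series to converge is in fact more careful than the paper, which simply asserts $\M^{-1}=I+\Delta t\,W+\mathcal{O}(\Delta t^2)$ without further justification.
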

\begin{proof}
For any $m=0,\dots, M$ and $k=0,\dots, K$, 
the 
equation \eqref{eq:explicit_dec_correction} 
tells us that the mass matrix $\M(\bc^{m,(k-1)})$ can be written as
$\M(\bc^{m,(k-1)})=I-\Delta t W(\bc^{m,(k-1)})$ where $W$ does not 
depend on $\Delta t$, but only on $\bc^{m,(k-1)}$ and the production--destruction functions. It is defined as 
\begin{equation}\label{eq:W_matrix}
W(\bc^{m,(k-1)})_{ij}= \begin{cases}
-\sum\limits_{r=0}^M \sum\limits_{l=1}^I \frac{\theta_r^m}{c_i^{m,(k-1)}} \left( d_{i,l}(\bc^{r,(k-1)})\mathbbm{1}_{\lbrace \theta^m_r>0\rbrace} -p_{i,l} (\bc^{r,(k-1)})\mathbbm{1}_{\lbrace \theta^m_r<0 \rbrace} \right) \quad &\text{for }i=j\\
+\sum\limits_{r=0}^M \frac{\theta_r^m}{c_j^{m,(k-1)}} \left( p_{i,j}(\bc^{r,(k-1)})\mathbbm{1}_{\lbrace \theta^m_r>0\rbrace} -d_{i,j} (\bc^{r,(k-1)})\mathbbm{1}_{\lbrace \theta^m_r<0\rbrace} \right) \quad &\text{for }i\ne j
\end{cases}.
\end{equation}
 This leads to an inverse 

 \begin{equation*}
  (\M(\bc^{m,(k-1)}))^{-1} = I+\Delta t W(\bc^{m,(k-1)})
  + \mathcal{O}(\Delta t^2).
 \end{equation*}
Now, we can define $G$ by
 \begin{equation*}
  G(\bc^{m,(k-1)}):=\frac{1}{\Delta t} \left((\M(\bc^{m,(k-1)}))^{-1} - I \right) =
  W(\bc^{m,(k-1)}) + \mathcal{O}(\Delta t).
 \end{equation*}
So, we can write  
\begin{equation}
\bc^{m,(k)} = (\M(\bc^{m,(k-1)}))^{-1}\bc^0=\bc^0  + \Delta t G(\bc^{m,(k-1)}) \bc^0.
\end{equation}

\end{proof}

With the following lemma, 
we prove that the mPDeC process generates a 
Cauchy sequence similar to the continuous Picard iterations. Moreover, at each iteration, we differ 
from the previous step by an error of one order of accuracy
more. We will drop the
dependency on the subtimestep $m$, as all the relations hold for all of them.
\begin{lemma}\label{lemma:quotient}
 Let $\bc^{(k)}$ and $\bc^{(k-1)} \in \R^I$ verifying Lemma \ref{th:taylor_1st_order},
 then 
 \begin{equation}\label{eq:quotient}
  \frac{c_i^{(k)}}{c_i^{(k-1)}} =1+\Delta t^{k-1}g_i +\Ol(\Delta t^k)
 \end{equation}
holds where $g_i$ are constants independent from $\Delta t$. 
\end{lemma}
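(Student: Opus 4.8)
The plan is to run an induction on the correction index $k$, reducing the quotient to the size of the consecutive difference $\bc^{(k)}-\bc^{(k-1)}$ and exploiting that each correction gains one power of $\Delta t$. First I would invoke Lemma~\ref{th:taylor_1st_order} at levels $k$ and $k-1$ to write
\begin{equation*}
c_i^{(k)} = c_i^0 + \Delta t\,\bigl(G(\bc^{(k-1)})\bc^0\bigr)_i, \qquad c_i^{(k-1)} = c_i^0 + \Delta t\,\bigl(G(\bc^{(k-2)})\bc^0\bigr)_i .
\end{equation*}
Factoring $c_i^0$ out of numerator and denominator and expanding the reciprocal geometrically gives
\begin{equation*}
\frac{c_i^{(k)}}{c_i^{(k-1)}} = 1 + \frac{\Delta t}{c_i^0}\Bigl(\bigl(G(\bc^{(k-1)})-G(\bc^{(k-2)})\bigr)\bc^0\Bigr)_i + \Ol\bigl(\Delta t^2\,\|\bc^{(k-1)}-\bc^{(k-2)}\|\bigr).
\end{equation*}
This is legitimate because $\bc^0>0$ and, by the positivity theorem together with the expansion $c_i^{(k-1)}=c_i^0+\Ol(\Delta t)$, the denominator stays bounded away from zero uniformly for $\Delta t$ small. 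Everything is thus reduced to estimating the order in $\Delta t$ of $G(\bc^{(k-1)})-G(\bc^{(k-2)})$, i.e.\ of $\bc^{(k-1)}-\bc^{(k-2)}$.

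Second, I would set up the induction, taking as inductive hypothesis the claim at level $k-1$, which yields $\bc^{(k-1)}-\bc^{(k-2)}=\Delta t^{k-2}\bd+\Ol(\Delta t^{k-1})$ with $\bd$ independent of $\Delta t$, and in particular $\bc^{(k-1)}-\bc^{(k-2)}=\Ol(\Delta t^{k-2})$. Since $p_{i,j}$ and $d_{i,j}$ are Lipschitz and the substates remain bounded below away from zero, the matrix $W$ in \eqref{eq:W_matrix}, and hence $G=W+\Ol(\Delta t)$, is Lipschitz in its argument; subtracting the two identities of the first paragraph gives
\begin{equation*}
\bc^{(k)}-\bc^{(k-1)} = \Delta t\,\bigl(G(\bc^{(k-1)})-G(\bc^{(k-2)})\bigr)\bc^0 = \Ol\bigl(\Delta t\,\|\bc^{(k-1)}-\bc^{(k-2)}\|\bigr)=\Ol(\Delta t^{k-1}).
\end{equation*}
The base case $k=1$ is the direct computation $\bc^{(1)}-\bc^{(0)}=\Delta t\,G(\bc^{0})\bc^0=\Ol(\Delta t)$, for which $g_i=0$.

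Third, I would extract the $\Delta t$-independent coefficient. Feeding $\bc^{(k-1)}-\bc^{(k-2)}=\Delta t^{k-2}\bd+\Ol(\Delta t^{k-1})$ into the leading-order dependence of $G$ on its argument (using differentiability of the rates, which holds for the smooth PDS fluxes; otherwise one keeps only the order bound above) produces $\bigl(G(\bc^{(k-1)})-G(\bc^{(k-2)})\bigr)\bc^0 = \Delta t^{k-2}\bv+\Ol(\Delta t^{k-1})$ for a $\Delta t$-independent vector $\bv$ built from the derivative of $W$ at the limiting state $\bc^0$. Substituting into the first paragraph gives $\frac{c_i^{(k)}}{c_i^{(k-1)}}=1+\Delta t^{k-1}\tfrac{v_i}{c_i^0}+\Ol(\Delta t^{k})$, which is the claim with $g_i:=v_i/c_i^0$. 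The main obstacle is the order bookkeeping in the last two steps: one must verify that the $\Ol(\Delta t)$ tail of $G$ and the quadratic remainder of its expansion enter only at order $\Delta t^{k}$ or higher, and that the dependence of $W$ on the \emph{full} vector of substates $\{\bc^{r,(k-1)}\}_{r=0}^M$ (all $r$ contribute, not only $r=m$) is genuinely controlled by $\|\bbc^{(k-1)}-\bbc^{(k-2)}\|$, so the induction must be carried on the whole vector $\bbc^{(k)}$ rather than componentwise. A secondary technical point, already used above, is the uniform positive lower bound on the denominators $c_i^{(k-1)}$, which rests on the positivity result and on the first-order expansion of Lemma~\ref{th:taylor_1st_order}.
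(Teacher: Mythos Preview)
Your proposal is correct and follows essentially the same route as the paper: induction on $k$, writing both $c_i^{(k)}$ and $c_i^{(k-1)}$ via Lemma~\ref{th:taylor_1st_order} as $c_i^{0}+\Delta t\,(G\bc^0)_i$, and then controlling the quotient through a Taylor expansion of $G$ in its state argument. The only cosmetic difference is that the paper manipulates the quotient by multiplying numerator and denominator by the conjugate $c_i^{(0)}-\Delta t\,G_i(\bbc^{(k-1)})\bc^{(0)}$, whereas you factor out $c_i^0$ and expand the reciprocal geometrically; your remark that the induction should really be run on the full vector $\bbc^{(k)}$ (since $W$ depends on all substates $\bc^{r,(k-1)}$) is a technical point the paper's proof leaves implicit.
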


\begin{proof}
 We  prove the lemma by induction.\\
 For $k=1$,  equation \eqref{eq:quotient} follows directly from Lemma \ref{th:taylor_1st_order}, i.e., 
 $  \frac{c_i^{(1)}}{c_i^{(0)}}=1+\Ol(\Delta t)$.\\
Given $k\in \N$, as induction hypothesis, \eqref{eq:quotient} holds for $k$, i.e.,
 \begin{equation}\label{eq_induction}
   c_i^{(k)}=c_i^{(k-1)}\left( 1+\Delta t^{k-1} g_i\right)+\Ol(\Delta t^k),
 \end{equation}
where $g_i=G_i(\bbc^{(k-1)}) \bc^0$  and $G_i$ denotes the $i$th row of the matrix $G$.
 We can prove that \eqref{eq:quotient} is verified also for $k+1$.
 Using Lemma \eqref{th:taylor_1st_order}, we obtain 
 \begin{equation*}
 \begin{aligned}
   \frac{c_i^{(k+1)}}{c_i^{(k)}} =&\frac{c_i^{(0)}+\Delta t G_i(\bbc^{(k)})\bc^{(0)}  }{c_i^{(0)}+\Delta t G_i(\bbc^{(k-1)}) \bc^{(0)}  }=\\
   =&\frac{\left(c_i^{(0)}+\Delta t G_i(\bbc^{(k)})\bc^{(0)} \right) \left(c_i^{(0)}-\Delta t G_i(\bbc^{(k-1)}) \bc^{(0)}\right) }{\left(c_i^{(0)}+\Delta t G_i(\bbc^{(k-1)}) \bc^{(0)}\right)\left(c_i^{(0)}-\Delta t G_i(\bbc^{(k-1)}) \bc^{(0)}\right) }=\\
     =&\frac{\left(c_i^{(0)} \right)^2+\Delta t c_i^{(0)} G_i(\bbc^{(k)})\bc^{(0)}  
     -\Delta t c_i^{(0)} G_i(\bbc^{(k-1)}) \bc^{(0)}}{\left(c_i^{(0)}\right)^2- 
 \left(\Delta t G_i(\bbc^{(k-1)}) \bc^{(0)}\right)^2 }  + \\
   &-\frac{\left(\Delta t G_i(\bbc^{(k-1)}) \bc^{(0)} \right)\left(\Delta t G_i(\bbc^{(k)}) \bc^{(0)} \right) } 
   {\left(c_i^{(0)}\right)^2- \left(\Delta t G_i(\bbc^{(k-1)}) \bc^{(0)}\right)^2 }.
 \end{aligned}
 \end{equation*}
 Inserting the induction step \eqref{eq_induction}
 we get
 \begin{equation*}
  \begin{aligned}
    \frac{c_i^{(k+1)}}{c_i^{(k)}} =&\frac{\left(c_i^{(0)}\right)^2+\Delta t c_i^{(0)} \left(G_i\left(\bc^{(k-1)}\bullet \left( \bs{1}+\Delta t^{k-1} {\bf{g}}\right)+\Ol(\Delta t^k) \right) 
   - G_i(\bc^{(k-1)}) \right) \bc^{(0)}  
    }{\left(c_i^{(0)}\right)^2- 
    \left(\Delta t G_i(\bc^{(k-1)}) \bc^{(0)}\right)^2 }+
    \\
    &- \frac{\left(\Delta t G_i(\bc^{(k-1)}) \bc^{(0)} 
    \right)\left(\Delta t G_i\left(\bc^{(k-1)}\bullet \left( \bs{1}+\Delta t^{k-1} {\bf{g}}\right)+\Ol(\Delta t^k) \right) \bc^{(0)} \right) }
   {\left(c_i^{(0)}\right)^2- 
    \left(\Delta t G_i(\bc^{(k-1)}) \bc^{(0)}\right)^2 }
  \end{aligned}
 \end{equation*}
 Here, $\bullet$ denotes the Hadamard product and $\bs{1}:=(1,\dots,1)^T\in \R^I$. 
 The induction step is evaluated for every 
 entry $i$. Using the regularity of $G_i$, we expand its Taylor series in $\bc^{(k-1)}$ for 
 every constituent $i$. Thanks again to the result of Lemma \eqref{th:taylor_1st_order}, we can write
   \begin{equation*}
   \begin{aligned}
    \frac{c_i^{(k+1)}}{c_i^{(k)}} =&\frac{\left(c_i^{(0)}\right)^2+\Delta t c_i^{(0)} G_i\left(\bc^{(k-1)} 
    \right) \bc^{(0)}  + \Delta t^{k} c_i^{(0)} \nabla G_i(\mean{\bc}){\bf{g}} \bc^{(0)}
    - \Delta t c_i^{(0)} G_i(\bc^{(k-1)})  \bc^{(0)} }{
    \left(c_i^{(0)}\right)^2- \left(\Delta t G_i(\bc^{(k-1)}) \bc^{(0)}\right)^2 } +\\
     -&\frac{\left(\Delta t G_i(\bc^{(k-1)}) \bc^{(0)} 
    \right)\left(\Delta t G_i\left(\bc^{(k-1)} \right)\bc^{(0)} + \Delta t^{k}\nabla G_i(\mean{\bc}) {\bf{g}} \bc^{(0)} +\Ol(\Delta t^k)  \right) }{
    \left(c_i^{(0)}\right)^2- \left(\Delta t G_i(\bc^{(k-1)}) \bc^{(0)}\right)^2} 
      \end{aligned}
 \end{equation*}
 where $\mean{\bc}$ is the point of the Lagrange form of the remainder of the Taylor expansion. Hence, we can proceed as follows
   \begin{equation*}
   \begin{aligned}
     \frac{c_i^{(k+1)}}{c_i^{(k)}}=&\frac{\left(c_i^{(0)}\right)^2+\Delta t c_i^{(0)} G_i\left(\bc^{(k-1)} 
    \right) \ \bc^{(0)}  + \Delta t^{k} c_i^{(0)}\nabla G_i(\mean{\bc}) {\bf{g}} \bc^{(0)}
    - \Delta t c_i^{(0)} G_i(\bc^{(k-1)})  \bc^{(0)} }{
    \left(c_i^{(0)}\right)^2- \left(\Delta t G_i(\bc^{(k-1)}) \bc^{(0)}\right)^2 } +\\
     -&\frac{\left(\Delta t G_i(\bc^{(k-1)}) \bc^{(0)} \right)^2 +
    \Ol(\Delta t^{k+1})}{
    \left(c_i^{(0)}\right)^2- \left(\Delta t G_i(\bc^{(k-1)}) \bc^{(0)}\right)^2} =\\
    =&\frac{ \left(c_i^{(0)}\right)^2- \left(\Delta t G_i(\bc^{(k-1)}) \bc^{(0)}\right)^2 +\Delta t^kc_i^{(0)} \nabla G_i(\mean{\bc}) {\bf{g}} \bc^{(0)} +\Ol(\Delta t^{k+1})
    } 
    {\left(c_i^{(0)}\right)^2- \left(\Delta t G_i(\bc^{(k-1)}) \bc^{(0)}\right)^2} = \\
    =&1+\Delta t^k \hat{g}_i +\Ol(\Delta t^{k+1})
  \end{aligned}
  \end{equation*}
which finally proves equation \eqref{eq:quotient} for $k+1$. 
 \end{proof}

Now, let us prove on the Lipschitz continuity of the operator $\L^1-\L^2$.
\begin{lemma}[Lipschitz continuity of $\L^1-\L^2$]\label{th:lipschitz}
Let $\bbc^{(k)}, \bbc^{(k-1)}, \bbc^{*} \in \R^{M\times I}_+$ fulfil Lemma \ref{th:taylor_1st_order}. 
Then, the operator $\L^1 -\L^2$ is Lipschitz continuous with constant $\Delta t C_L$, i.e.,
\begin{equation}
||\L^1 (\bbc^{(k-1)}) -\L^2 (\bbc^{(k-1)}, \bbc^{(k)}) -\L^1 ( \bbc^*) +\L^2 (\bbc^*,\bbc^*)||\leq C_L \Delta t ||\bbc^{(k-1)} -\bbc^*|| .
\end{equation}
\end{lemma}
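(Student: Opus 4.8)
The plan is to exploit the algebraic structure of $\L^1-\L^2$ so that the contributions of order $\Delta t^0$ cancel, leaving an explicit factor $\Delta t$ in front of a remainder controlled by Lipschitz continuity of the production and destruction functions.

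First I would write the four operators componentwise in the $(m,i)$ slot. Since the starting states coincide, $\bc^{0,(k-1)}=\bc^{0,*}=\bc^0$ (as already used in Lemma \ref{th:coercivity}), and since evaluating $\L^2(\bbc^*,\bbc^*)$ makes every Patankar quotient $c^{m,*}_l/c^{m,*}_l$ equal to $1$, the terms $c^{m,(k-1)}_i-c^0_i$, $c^{m,*}_i-c^0_i$ and the forward-Euler contributions $-\beta^m\Delta t\,E_i(\bc^0)$ coming from $\L^1$ in \eqref{eq:L1_mPDeC} cancel pairwise. What survives in the $(m,i)$ component is
\[
\Delta t\sum_{r=0}^M\theta_r^m\sum_{j=1}^I\Big[\big(H^p_{r,j}(\bbc^{(k-1)},\bbc^{(k)})-H^p_{r,j}(\bbc^*,\bbc^*)\big)-\big(H^d_{r,j}(\bbc^{(k-1)},\bbc^{(k)})-H^d_{r,j}(\bbc^*,\bbc^*)\big)\Big],
\]
where $H^p_{r,j}(\bx,\by)=p_{i,j}(\bx^r)\,\by^m_{\gamma(j,i,\theta_r^m)}/\bx^m_{\gamma(j,i,\theta_r^m)}$ couples a production term with its quotient, and $H^d_{r,j}$ is its destruction analogue (with $p_{i,j}\mapsto d_{i,j}$ and $\gamma(j,i,\cdot)\mapsto\gamma(i,j,\cdot)$). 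The crucial gain is the prefactor $\Delta t$; it remains to bound the bracket by a constant multiple of $\|\bbc^{(k-1)}-\bbc^*\|$.

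Next I would argue that each $H^p_{r,j}$, $H^d_{r,j}$ is jointly Lipschitz in $(\bx,\by)$ with a uniform constant $L_H$ on the compact, strictly positive set in which all substages live: positivity is the content of the positivity result proved above, and uniform upper and lower bounds follow from continuous dependence on the data over the fixed interval $[t^n,t^{n+1}]$. On this set $p_{i,j},d_{i,j}$ are Lipschitz and the quotients have denominators bounded away from zero (and in fact tend to $1$, cf.\ Lemma \ref{lemma:quotient}), so a product of the two is Lipschitz. This gives $|H(\bbc^{(k-1)},\bbc^{(k)})-H(\bbc^*,\bbc^*)|\leq L_H\big(\|\bbc^{(k-1)}-\bbc^*\|+\|\bbc^{(k)}-\bbc^*\|\big)$ for each such $H$.

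The main obstacle, and the genuinely new feature relative to the single-argument setting of Proposition \ref{DeC_prop}, is the appearance of the not-yet-reduced iterate $\bbc^{(k)}$ through the quotients, whereas the target estimate may only involve $\|\bbc^{(k-1)}-\bbc^*\|$. I would close this gap with Lemma \ref{th:taylor_1st_order}: writing $\bc^{m,(k)}=\bc^0+\Delta t\,G(\bc^{m,(k-1)})\bc^0$ and noting that the fixed point satisfies $\M(\bc^{m,*})\bc^{m,*}=\bc^0$, i.e.\ $\bc^{m,*}=\bc^0+\Delta t\,G(\bc^{m,*})\bc^0$ (this is exactly $\L^2(\bbc^*,\bbc^*)=0$ rewritten through the mass matrix), subtraction yields $\bc^{m,(k)}-\bc^{m,*}=\Delta t\,\big(G(\bc^{m,(k-1)})-G(\bc^{m,*})\big)\bc^0$. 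Since $G$ inherits Lipschitz continuity from $W$ in \eqref{eq:W_matrix}, this gives $\|\bbc^{(k)}-\bbc^*\|\leq C'\Delta t\,\|\bbc^{(k-1)}-\bbc^*\|$, so that $\bbc^{(k)}$ is in fact one order closer to $\bbc^*$ than $\bbc^{(k-1)}$. Substituting back, each bracket term is $\leq L_H(1+C'\Delta t)\|\bbc^{(k-1)}-\bbc^*\|$; summing over the finitely many indices $r,j$ with bounded weights $\theta_r^m$ and multiplying by the prefactor $\Delta t$ then yields the claim with a suitable constant $C_L$.
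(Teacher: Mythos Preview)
Your proof is correct and reaches the conclusion, but it takes a route genuinely different from the paper's.

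The paper's strategy is to invoke Lemma~\ref{lemma:quotient} to replace the modified operator $\L^2(\bbc^{(k-1)},\bbc^{(k)})$ by the classical single-argument operator $\L^2(\bbc^{(k-1)})$ from \eqref{eq:L2}, at the cost of an $\mathcal{O}(\Delta t^k)$ error (the quotients satisfy $c^{(k)}_i/c^{(k-1)}_i=1+\mathcal{O}(\Delta t^{k-1})$, and the time integral contributes one more factor of $\Delta t$). The remaining difference $\L^1(\bbc^{(k-1)})-\L^2(\bbc^{(k-1)})-\L^1(\bbc^*)+\L^2(\bbc^*)$ is then handled exactly as in the standard DeC analysis of Proposition~\ref{DeC_prop}: it is written as the time integral of the interpolation defect $(\I_M-\I_0)$ applied to $\{E_i(\bc^{r,(k-1)})-E_i(\bc^{r,*})\}_r$, and the Lipschitz continuity of $E_i$ finishes the job.

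Your approach bypasses Lemma~\ref{lemma:quotient} entirely: you keep the quotients explicit inside the $H$-functions and dispose of the unwanted argument $\bbc^{(k)}$ via Lemma~\ref{th:taylor_1st_order}, deriving $\|\bbc^{(k)}-\bbc^*\|\leq C'\Delta t\,\|\bbc^{(k-1)}-\bbc^*\|$ from the Lipschitz continuity of $G$ in \eqref{eq:W_matrix}. This is valid and arguably more self-contained. Observe, though, that this last estimate is already \emph{equivalent} to the statement of the lemma: by Lemma~\ref{th:coercivity} one has $\L^1(\bbc^{(k)})-\L^1(\bbc^*)=\bbc^{(k)}-\bbc^*$, and the mPDeC identity $\L^1(\bbc^{(k)})=\L^1(\bbc^{(k-1)})-\L^2(\bbc^{(k-1)},\bbc^{(k)})$ together with $\L^2(\bbc^*,\bbc^*)=0$ turns your mass-matrix bound directly into the desired inequality. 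Thus your $H$-function machinery, while correct, is redundant. What the paper's route buys is a clean factorisation into ``reduce to classical DeC'' plus ``classical Lipschitz estimate,'' at the price of relying on the more delicate asymptotics of Lemma~\ref{lemma:quotient}; what your route buys is directness and independence from that lemma, at the price of a somewhat circuitous presentation.
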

\begin{proof}
Now, we apply Lemma \eqref{lemma:quotient} to 
substitute the new $\L^2$ operator \eqref{eq:L2m} with 
the original one of the classical DeC \eqref{eq:L2} adding an error of order 
$\Delta t^{k-1}$ to the operator. We get another order from the time integration, 
such that
$$\L^2(\bbc^{(k-1)}, \bbc^{(k)} ) = \L^2(\bbc^{(k-1)} ) + \mathcal{O}(\Delta t^k)$$
and, trivially, $\L^2(\bbc^*, \bbc^*) = \L^2(\bbc^*)$ holds. Together, we obtain 
\begin{align}
&\left\lvert \left \lvert \L^1 (\bbc^{(k-1)}) -\L^2 (\bbc^{(k-1)}, \bbc^{(k)}) -\L^1 ( \bbc^*) +\L^2 (\bbc^*,\bbc^*) \right \rvert \right \rvert \leq \\
\leq & \left\lvert \left \lvert 
\L^1 (\bbc^{(k-1)}) - \L^2 (\bbc^{(k-1)}) -\L^1 (\bbc^*) +\L^2 (\bbc^*)\right 
\rvert \right \rvert + \mathcal{O}(\Delta t^k). \label{eq:Second_term}
\end{align}
Now, we have to take care
about the different variables in the operators. Let us start studying the operator $\L^1 -\L^2$.
We are focusing on each line of the schemes for an arbitrary subtimestep. The difference is given by
\begin{equation}
\begin{split}
&\L^{1,m}_{i}(\bc^{(k-1)})-\L^{2,m}_i(\bc^{(k-1)}) =\\
& \bigintsss_{t^0}^{t^m} \I_M\left(\left\lbrace E_{i}(\bc^{r,(k-1)})   \right\rbrace_{r=0}^M \right) - \I_0\left(\left\lbrace  E_{i}(\bc^{r,(k-1)})   \right\rbrace_{r=0}^M \right) dt\\
& =\bigintsss_{t^0}^{t^m} (\I_M-\I_0)\left(\left\lbrace E_{i}(\bc^{r,(k-1)})  \right\rbrace_{r=0}^M \right)dt.
\end{split}
\end{equation}
Now, we can compute the difference of the two terms
\begin{equation}
\begin{split}
&||\L^{1,m}_{i}(\bc^{(k-1)})-\L^{2,m}_i(\bc^{(k-1)})-\L^{1,m}_{i}(\bc^*)+\L^{2,m}_i(\bc^*)|| =\\
=&\left\vert\left\vert \bigintsss_{t^0}^{t^m} (\I_M-\I_0)\left(\left\lbrace  E_{i}(\bc^{r,(k-1)})-E_{i}(\bc^{r,*})  \right\rbrace_{r=0}^M \right)dt \right \vert\right \vert\leq\\
\leq & \Delta t C_1 ||E_i(\bbc^{(k-1)})-E_i(\bbc^*) || \leq \\
\leq & \Delta t C_L ||\bc^{(k-1)}-\bc^* ||.
\end{split}
\end{equation}
In last step, we have used the regularity of the solutions $\bbc^{(k-1)}$ and $\bbc^{*}$ and the fact that $\I_M -\I_0$ brings 
an error of order zero $\mathcal{O}(1)$ times $\Delta t$ given
by the time integration. Then, we have used the Lipschitz continuity of
the functions $E_{i}$.\\
Overall, the constant $C_L$ depends on the operators $p$ and $d$
 and the Lemma is proven. 

\end{proof}

Finally, we need to show that the solution $\bc^*$ of the operator $\L^2(\bc^*, \bc^*)=0$ is an $(M+1)$-order accurate solution. This is given directly by the definition of the operator \eqref{eq:L2m}, since it is an $(M+1)$-order accurate approximation of the original problem \eqref{eq:original_model} when the two input coincide and, thus, the modification coefficients become 1 and the operator becomes the original one \eqref{eq:L2}.

\begin{theorem}[Convergence of mPDeC]\label{th:final_teo}

Let $\L^1(\cdot)$ and $\L^2(\cdot, \cdot)$ be the operators defined in \eqref{eq:L1_mPDeC} and \eqref{eq:L2m} respectively. The mPDeC procedure \eqref{DeC_method_new} gives an approximation solution with order of accuracy equal to $\min(M+1,K)$.
\end{theorem}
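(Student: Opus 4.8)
The plan is to deduce the result from the abstract DeC convergence statement of Proposition \ref{DeC_prop}, whose two hypotheses have been prepared as separate lemmas. First I would observe that the coercivity requirement on $\L^1$ is exactly Lemma \ref{th:coercivity}, which even gives the optimal constant $C_0=1$ because the two operators share the same initial state $\bc^0$ and the linear evolution part cancels in the difference $\L^1(\bbc^{(k)})-\L^1(\bbc^*)$. The Lipschitz requirement on $\L^1-\L^2$ is Lemma \ref{th:lipschitz}. The one genuinely new point compared with the classical DeC of \cite{abgrall2017dec} is that our $\L^2$ now carries two arguments; here I would invoke Lemma \ref{lemma:quotient}, which shows that the modification coefficients $c^{(k)}_j/c^{(k-1)}_j=1+\Ol(\Delta t^{k-1})$ collapse to $1$, so that $\L^2(\bbc^{(k-1)},\bbc^{(k)})=\L^2(\bbc^{(k-1)})+\Ol(\Delta t^{k})$ while $\L^2(\bbc^*,\bbc^*)=\L^2(\bbc^*)$. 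This reduces the two-argument operator to the single-argument one of \eqref{eq:L2} up to a harmless $\Ol(\Delta t^k)$ perturbation, exactly as exploited in \eqref{eq:Second_term}.

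Second, I would run the error recursion displayed in \eqref{eq:coercivity}--\eqref{eq:lipschitz}. Starting from coercivity, $||\bbc^{(k)}-\bbc^*||\leq ||\L^1(\bbc^{(k)})-\L^1(\bbc^*)||$; then I substitute the correction step \eqref{DeC_method_new}, namely $\L^1(\bbc^{(k)})=\L^1(\bbc^{(k-1)})-\L^2(\bbc^{(k-1)},\bbc^{(k)})$, and use $\L^2(\bbc^*,\bbc^*)=0$ to rewrite the right-hand side as $||(\L^1-\L^2)(\bbc^{(k-1)})-(\L^1-\L^2)(\bbc^*)||$ plus the controlled $\Ol(\Delta t^{k})$ remainder. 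Applying Lemma \ref{th:lipschitz} yields the contraction $||\bbc^{(k)}-\bbc^*||\leq C\Delta t\,||\bbc^{(k-1)}-\bbc^*||$, and iterating over the $K$ correction steps gives
$$||\bbc^{(K)}-\bbc^*||\leq (C\Delta t)^K\,||\bbc^{(0)}-\bbc^*||.$$
Since $\bbc^{(0)}$ and $\bbc^*$ both remain bounded independently of $\Delta t$, the factor $(C\Delta t)^K$ contributes exactly $K$ powers of $\Delta t$ to the error budget.

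Third, I would close the argument with a triangle inequality against the exact solution. The discussion preceding the theorem establishes that the fixed point $\bbc^*$ of $\L^2(\cdot,\cdot)=0$ coincides with the solution of the classical operator \eqref{eq:L2} and is therefore an $(M+1)$-order accurate approximation, i.e. $||\bbc^*-\bbc^{exact}||\leq C_d\Delta t^{M+1}$. Combining the two estimates,
$$||\bbc^{(K)}-\bbc^{exact}||\leq ||\bbc^{(K)}-\bbc^*||+||\bbc^*-\bbc^{exact}||\leq C_1\Delta t^{K}+C_d\Delta t^{M+1},$$
so the dominant term scales like $\Delta t^{\min(M+1,K)}$, which is the claimed order. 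Intuitively, each correction buys one order of accuracy until the intrinsic precision $M+1$ of the underlying quadrature is reached, after which further iterations no longer help.

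The step I expect to be the main obstacle, and the only place where genuine care is needed, is the reduction of the two-argument $\L^2$ to a single-argument operator with a controlled remainder. Everything downstream is a mechanical assembly of the already-proved lemmas, but the estimate $\L^2(\bbc^{(k-1)},\bbc^{(k)})=\L^2(\bbc^{(k-1)})+\Ol(\Delta t^{k})$ is precisely what guarantees that the Patankar weighting does not degrade the order, and it rests on the exact rate at which $c^{(k)}_j/c^{(k-1)}_j\to 1$ quantified in Lemma \ref{lemma:quotient}. I would therefore verify the exponent bookkeeping there carefully, making sure the injected perturbation stays strictly below the target accuracy at every iteration so that it is absorbed rather than accumulated.
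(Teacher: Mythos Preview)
Your proposal is correct and follows essentially the same route as the paper: invoke Lemma \ref{th:coercivity} for coercivity and Lemma \ref{th:lipschitz} for the Lipschitz bound, iterate the contraction $||\bbc^{(k)}-\bbc^*||\leq C\Delta t\,||\bbc^{(k-1)}-\bbc^*||$ over $K$ steps, and finish with the triangle inequality against $\bbc^{exact}$ using the $(M{+}1)$-order accuracy of $\bbc^*$. Your emphasis on the two-argument-to-one-argument reduction via Lemma \ref{lemma:quotient} is well placed, though note that in the paper this reduction is already absorbed into the proof of Lemma \ref{th:lipschitz} rather than reappearing in the theorem's proof itself.
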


\begin{proof}
With Lemma \ref{th:coercivity} we proved the coercivity of the operator $\L^1$, which verifies the inequality in \eqref{eq:coercivity}. The definition of the mPDeC scheme \eqref{DeC_method_new} gives us the equality 
\eqref{eq:same_states} and the Lipschitz continuity lemma \ref{th:lipschitz} proves the inequality \eqref{eq:lipschitz}. 
Moreover, we know that $\bbc^*$ is an $(M+1)$-accurate approximation of the $\bbc^{ex}$ exact solution.

So, overall, we have
\begin{equation}
||\bbc^{(K)}-\bbc^{ex}||\leq ||\bbc^{*}-\bbc^{ex}|| + (C\Delta t)^K ||\bbc^{*}-\bbc^{(0)}||\leq C^* \Delta t^{M+1} + (C\Delta t)^K.
\end{equation}

\end{proof}

All the desired properties (unconditionally positivity, unconditionally 
conservation and high-order accuracy) are fulfilled by the proposed scheme.

\section{Numerics}\label{sec:Num}

In this section, we validate our theoretical investigation of section \ref{sec:mPDeC} considering some test cases from \cite{kopecz2018order,burchard2003high}.
We focus here only on systems of ordinary differential equations (ODE) (stiff and non-stiff). 
However, the mPDeC schemes can be in general used as time-integration methods for a semidiscrete formulation of partial differential equations, where the spatial discretization is already provided by RD, DG, FR,
(c.f. \cite{abgrall2018connection,abgrall2018asymptotic,ranocha2016summation}) or your favourite space discretization method.

As part of future research, we will consider these schemes in real applications like non-equilibrium flows or shallow water equations as it was already done, for example,
for MPRK together with a WENO approach in \cite{huang2019positivity} or a DG one in \cite{meister2014unconditionally}. In this work we focus on systems of ODEs.
In all the numerical tests, we applied the mPDeC approach on equidistant subtimestep points distributions.
\subsection{Linear Model}
We start by considering a simple linear test case proposed in \cite{burchard2003high,meister2014unconditionally}.
The initial value problem for the PDS is given by 
\begin{equation}\label{eq:linear_test}
\begin{aligned}
 &c_1'(t)=c_2(t)-5c_1(t),\quad  &c_2'(t)=5c_1(t)-c_2(t),\\
 &c_1(0)=c_1^0=0.9, \quad  &c_2(0)=c_2^0=0.1 \, .
\end{aligned}
\end{equation}
The initial values of \eqref{eq:linear_test} are positive and we can rewrite the right hand side of the ODE system in a PDS format 
as follows 
\begin{equation*}
  p_{1,2}(\bc)=d_{2,1}(\bc)=c_2, \quad p_{2,1}(\bc)=d_{1,2}(\bc)=5c_1 
\end{equation*}
and $p_{i,i}(\bc)=d_{i,i}(\bc)=0$ for $i=1,2$. The system describes the exchange of mass between two constituents. 
The analytical solution is given by
\begin{equation}\label{eq:analytical}
 c_1(t)=\frac{1}{6}\left(1 +  \frac{13}{5} \exp(-6t) \right) \text{ and } c_2(t)=1-c_1(t).
\end{equation}

\begin{figure}[!htp]
\centering
    \includegraphics[width=0.8\textwidth]{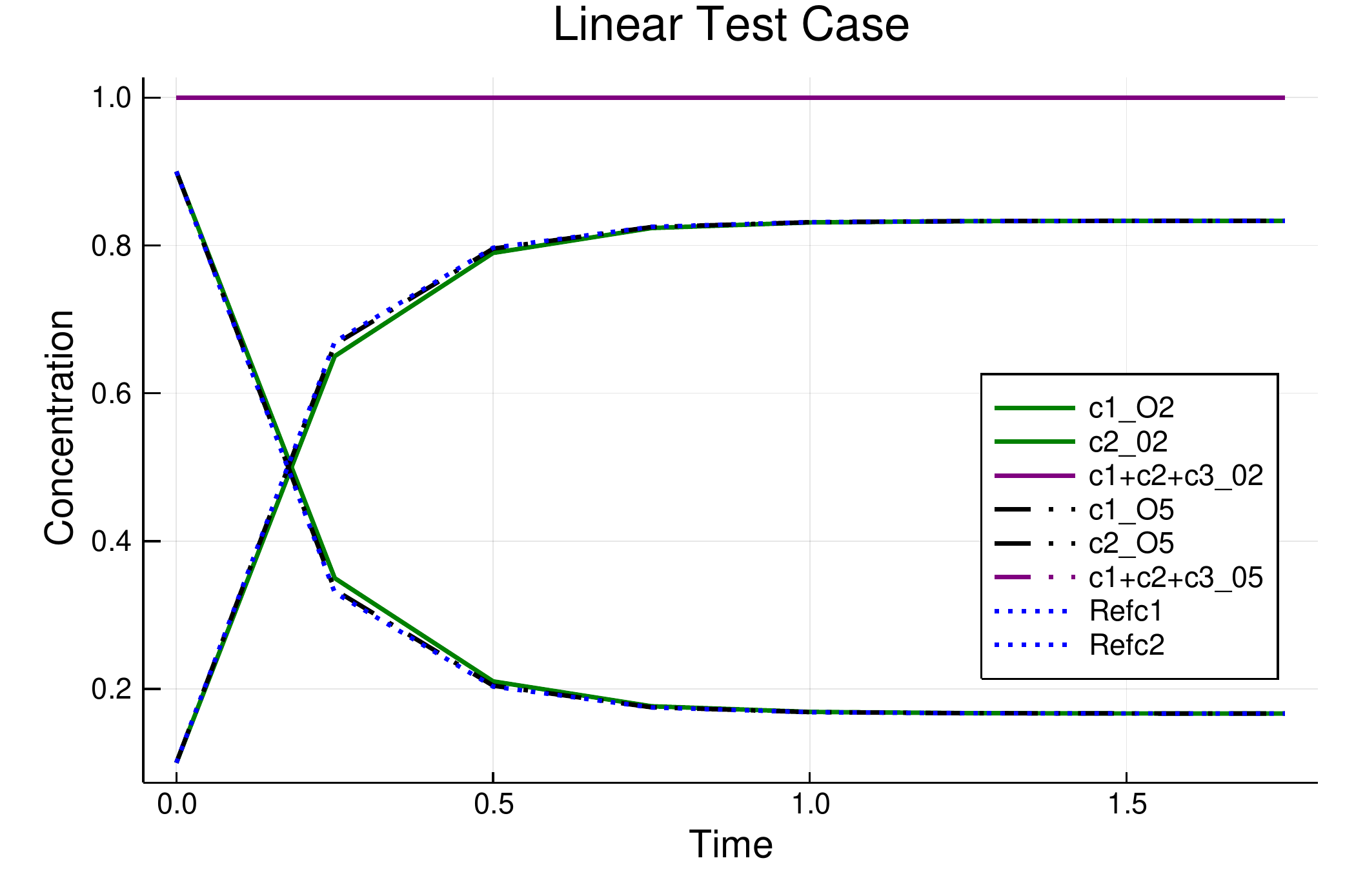}
  \caption{ Second and fifth order methods together with the reference solution \eqref{eq:analytical}
 }
  \label{fig:Linear_model}
\end{figure}

The problem is considered on the time interval  $[0,1.75]$ and,  analogously to \cite{burchard2003high}, 
we use $\Delta t=0.25$
in the simulations. In Figure \eqref{fig:Linear_model} we plot the analytical solution (dotted, blue line) 
and the approximated solutions using 2-nd (solid line, green) and 5-th (dash-dotted line, black) order 
mPDeC methods.
The purple lines represent the sum of the constituents and they are constantly equal to 1, since the methods are conservative. 
Qualitatively, we see that the 5-th order method approximates better the analytical solutions. Furthermore, to verify the order of convergence of the methods, we 
consider also the error behavior of the different order schemes. 
Differently from Kopecz and Meister \cite{kopecz2018order,kopecz2018unconditionally}
instead of calculating the relative errors, we compute the absolute discrete $L^2$ errors
taken over all the timesteps $\lbrace t^n \rbrace_{n=0}^N$ and all the constituents:
\begin{equation}\label{eq:error_formula}
 \E=\frac{1}{N} \sum_{n=1}^N \left( \frac{1}{I} \sum_{i=1}^I \left(c_i(t^n)-c_i^n \right)^2 \right)^\frac{1}{2}.
\end{equation}
After a comparison between the final time error and the one proposed \eqref{eq:error_formula}, we do not observe much discrepancy. Therefore, we will provide only results obtained with \eqref{eq:error_formula}.

In Figure \ref{fig:Linear_model_error}, the left picture shows the error decay for mPDeC schemes at different discretization scales $\Delta t$. In the right picture, we plot the slope of the error decay for different orders of accuracy. 
These graphs demonstrate the high-order accuracy of the proposed methods and the expected convergence rates, validating the theoretical results.
It is also possible to test the scheme with higher order of accuracy.
However, we have notice a reduction of the order as we reach orders higher than $10$, 
probably due to Runge phenomena.
These are well known issues that arise also with the usual DeC methods
\cite{dutt2000dec} using equidistant points distribution in the subtimesteps. 
A possible solution of this problem can be the usage of Gau\ss-Lobatto nodes as point distributions.
This and stability investigations will be part of future research.
\begin{figure}[!htp]
\centering
    \includegraphics[width=0.48\textwidth]{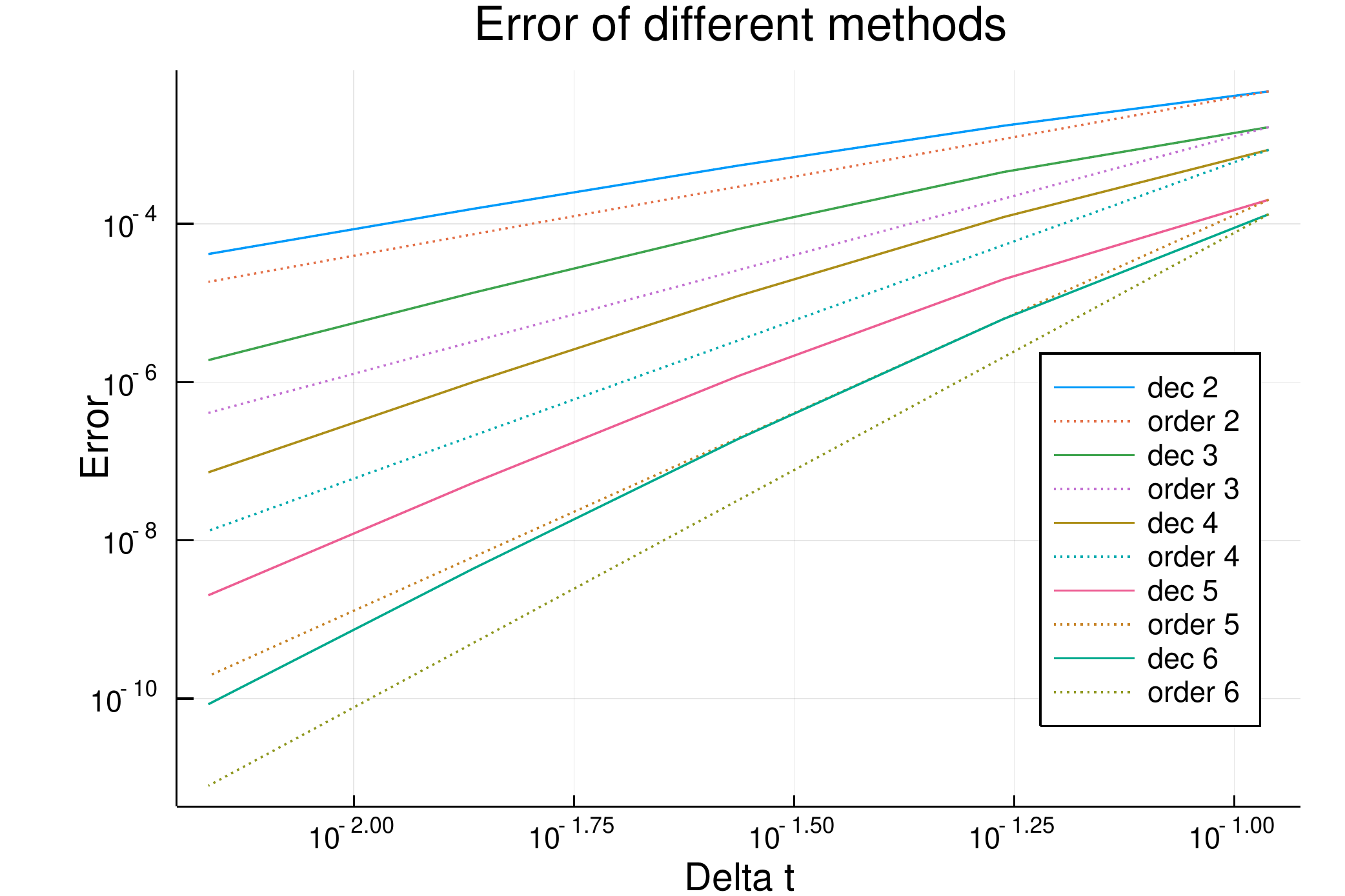}
    \includegraphics[width=0.48\textwidth]{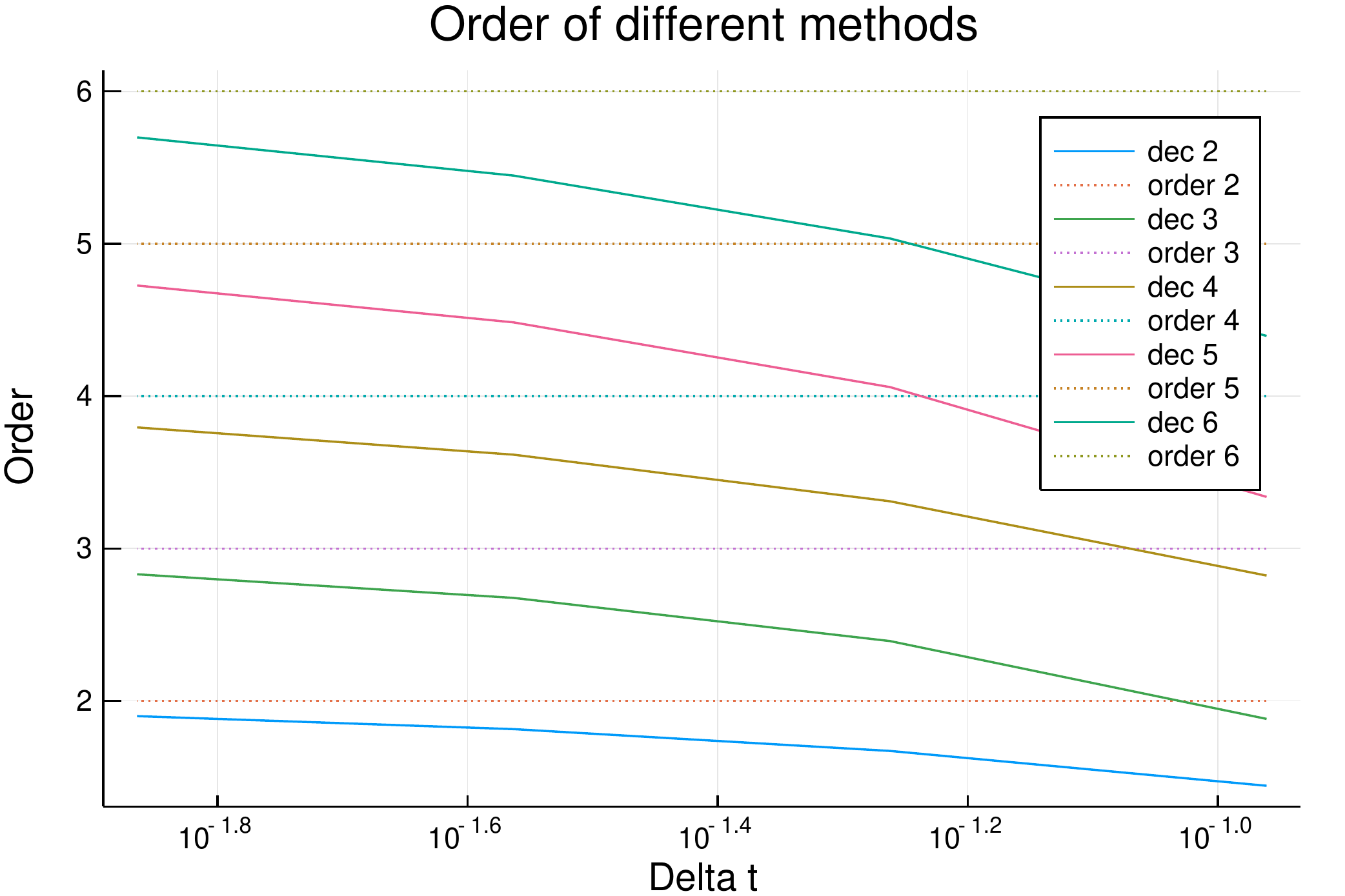}
  \caption{ Second to sixth order error decay and slope of the errors
 }
  \label{fig:Linear_model_error}
\end{figure}

\subsection{Nonlinear test problem}
In this next subsection, we consider the nonlinear test problem 
\begin{equation}\label{eq:nonlinear_test}
 \begin{aligned}
  c_1'(t)&=-\frac{c_1(t)c_2(t)}{c_1(t)+1},\\
  c_2'(t)&= \frac{c_1(t)c_2(t)}{c_1(t)+1}-0.3c_2(t),\\
  c_3'(t)&=0.3c_2(t)
 \end{aligned}
\end{equation}
with initial condition $\bc^0=(9.98,0.01,0.01)^T$. As before, this problem was proposed in \cite{kopecz2018order}. The PDS system in the matrix formulation can be expressed by 
\begin{equation*}
 p_{2,1}(\bc)=d_{1,2}(\bc)=\frac{c_1(t)c_2(t)}{c_1(t)+1}, \quad  p_{3,2}(\bc)=d_{2,3}(\bc)=0.3c_2(t)
\end{equation*}
and $p_{i,j}(\bc)=d_{i,j}(\bc)=0$ for all other combinations of $i$ and $j$. 
This system \eqref{eq:nonlinear_test} is used to describe an algal bloom, that transforms nutrients $c_1$ via phytoplankton $c_2$ into detritus $c_3$.
In our test, we consider the time interval $[0,30]$ and $\Delta t=0.5$. 
We calculate the reference solution with the 
strong stability preserving Runge-Kutta method 10 stages 4th order introduced by Ketcheson 
\cite{ketcheson2008highly}, further investigated in \cite{ranocha2018L2stability} and
implemented in Julia, see \cite{rackauckas2017differentialequations}
for details.  \\
In Figure \ref{fig:Non_linear}, the 6-th order mPDeC (black, dash-dotted lines) approximates very precisely the reference solution.
The 2-nd order method (solid line, green) shows the same structure as the reference solution but it exhibits a severe error. However, 
the approximated second order solution is comparable with the results obtained in \cite{kopecz2018order}.
We see again that the conservation property is fulfilled in the purple lines. 
\begin{figure}[!htp]
\centering
    \includegraphics[width=0.8\textwidth]{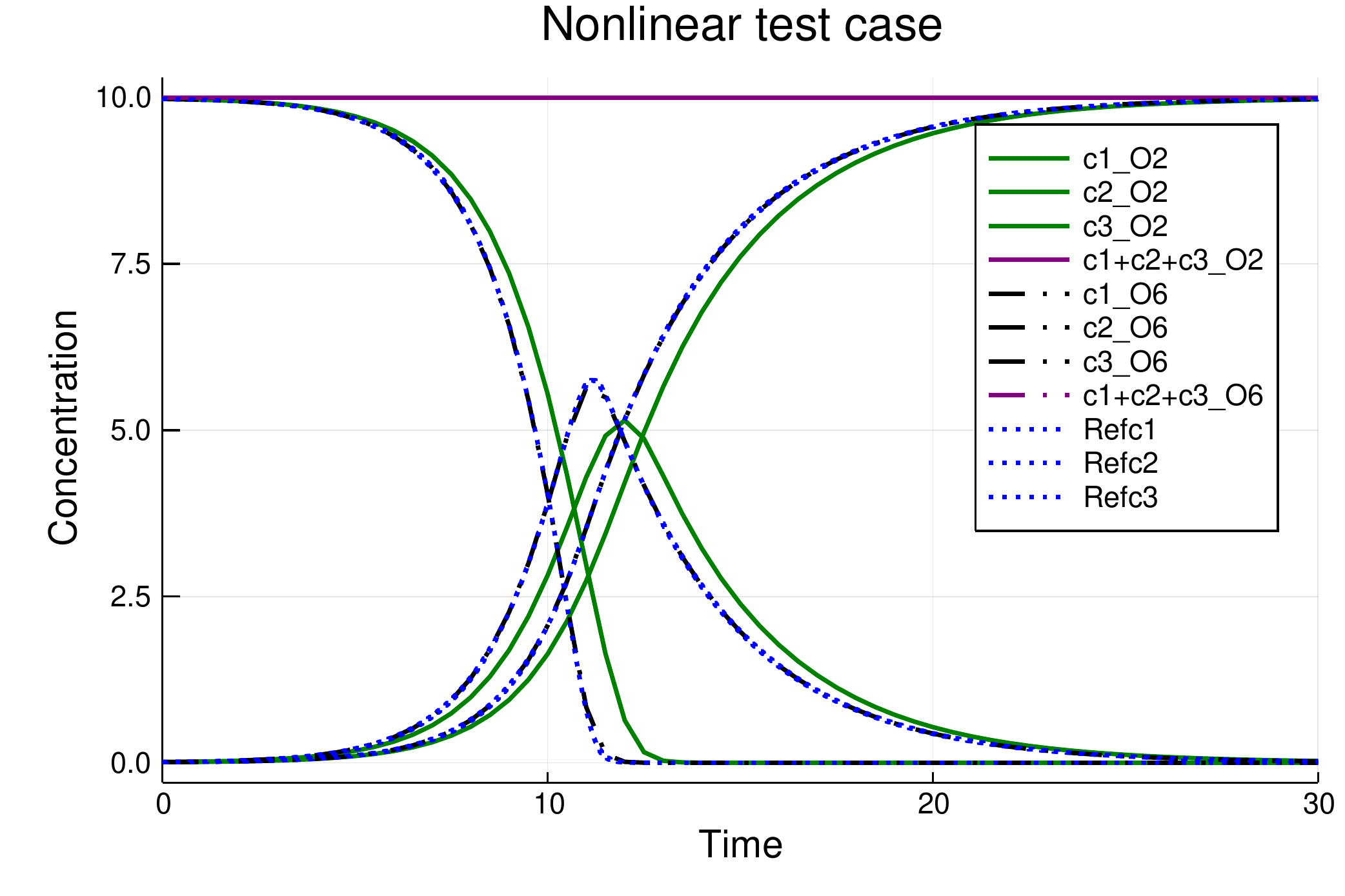}
  \caption{ Second order and sixth order methods together with the reference solution (SSPRK104)
 }
  \label{fig:Non_linear}
\end{figure}

Since we lack of an analytical solution, in the error plots, we compare successive errors between two refinements of the time mesh 
\begin{equation}
\E_N=\frac{1}{N} \sum_{n=1}^N \left( \frac{1}{I} \sum_{i=1}^I \left(c_{i,N}^n-c_{i,2N}^{2n} \right)^2 \right)^\frac{1}{2}. \end{equation}
Here, the subscript $N$ indicates the number of equispaced timesteps used to subdivide the total time interval.
The results are presented in Figure \ref{fig:Non_Linear_model_error}. As for the linear case, we can see that the error decay fulfils the expected behavior and that the order of accuracy tends to the correct one.
The slight decrease of the slope function in the right picture using sixth order can be explained by the fact that the error values are close to machine precision in that area and this causes the deprecation of the slope. \\

These plots verify our theoretical investigations from section \ref{sec:mPDeC}.

\begin{figure}[!htp]
\centering
    \includegraphics[width=0.48\textwidth]{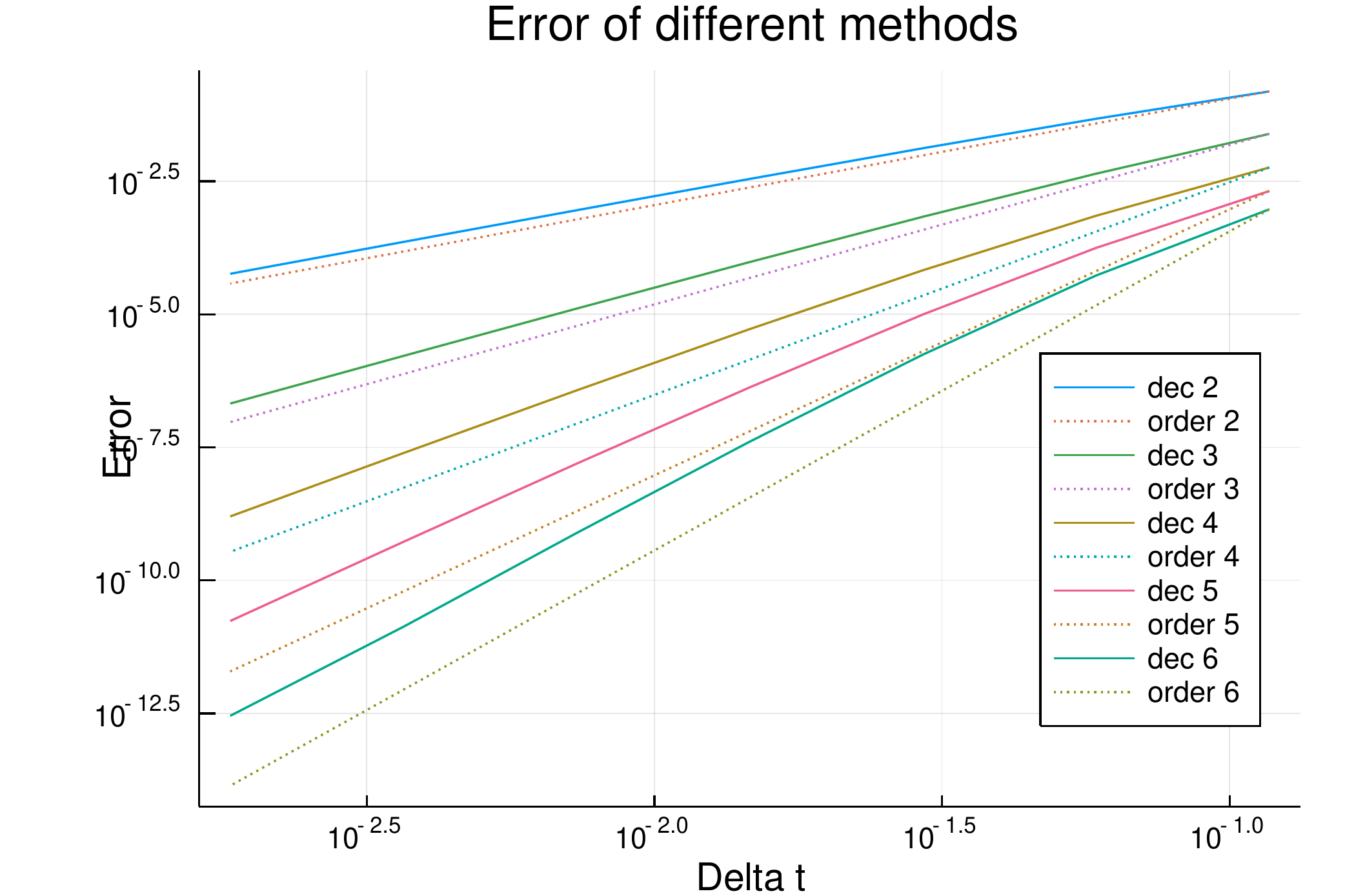}
    \includegraphics[width=0.48\textwidth]{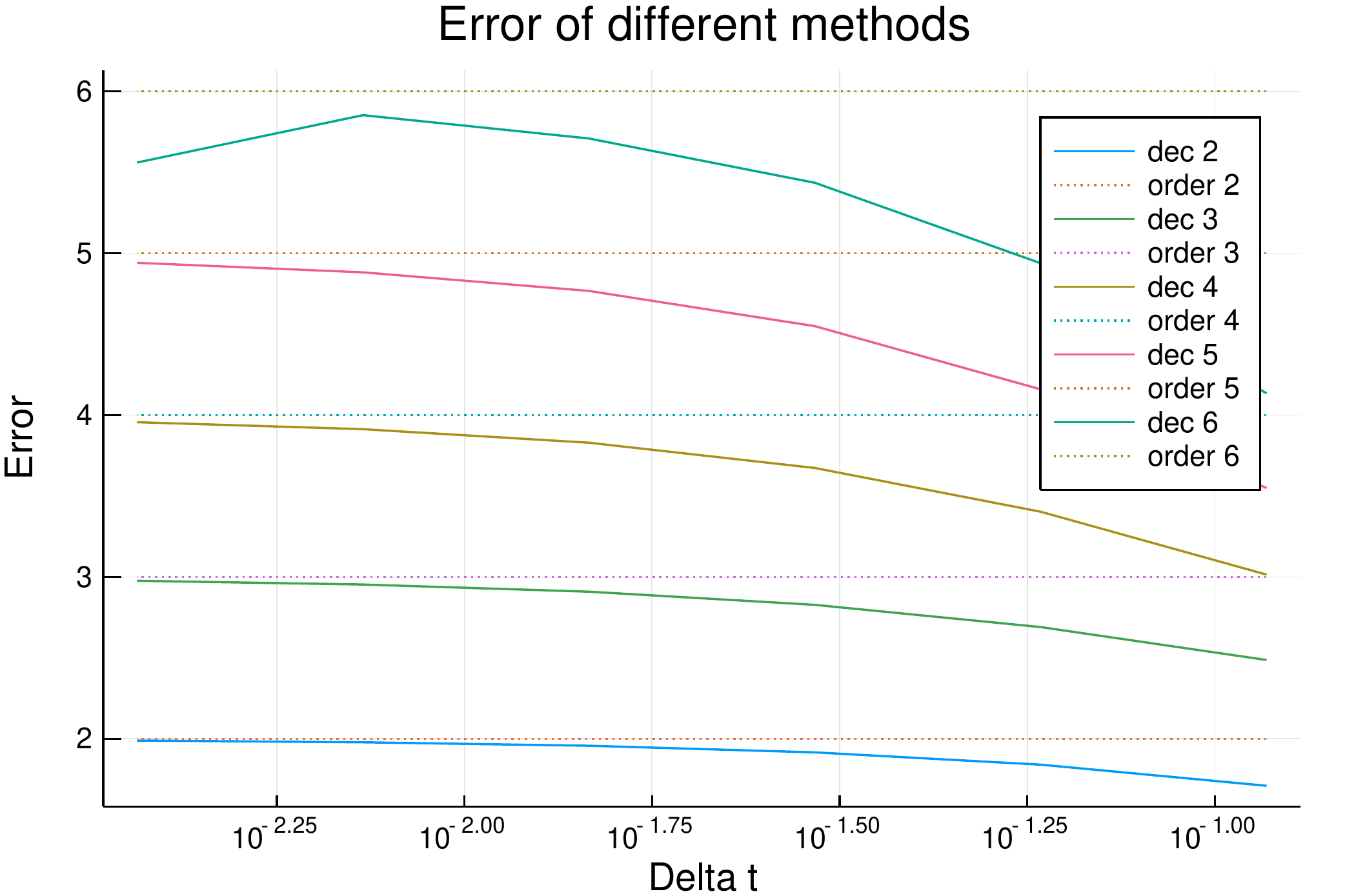}
  \caption{ Second to sixth order error behaviors and slopes of the errors
 }
  \label{fig:Non_Linear_model_error}
\end{figure}

\subsection{Robertson Test case }
In the last test case, we prove the robustness of the mPDeC schemes in presence of stiff problems. The proposed test is the Robertson problem for a chemical reaction system.
It consists of  
\begin{equation}\label{eq:Robertson}
 \begin{aligned}
  c_1'(t)&=10^4c_2(t)c_3(t)-0.04c_1(t)\\
  c_2'(t)&= 0.04c_1(t)-10^4c_2(t)c_3(t)-3\cdot 10^7c_2(t)^2\\
  c_3'(t)&=3\cdot 10^7c_2(t)^2
 \end{aligned}
\end{equation}
with initial conditions
$\bc^0=(1,0,0)$.\footnote{To avoid the division by zero in the mPDeC scheme,
we slightly modify the initial condition in the practical implementation, i.e.,  $\bc^0=(1-2eps, eps, eps)$ with $eps= 2.22 \cdot  10^{-16}$.}
The time interval of interest is $[10^{-6}, 10^{10}]$.
The PDS for \eqref{eq:Robertson} reads
\begin{equation*}
  p_{1,2}(\bc)=d_{2,1}(\bc)=10^4c_2(t)c_3(t), \quad  p_{2,1}(\bc)=d_{1,2}(\bc)=0.04c_1(t),\quad   
  p_{3,2}(\bc)=d_{2,3}(\bc)=3\cdot 10^7c_2(t)
\end{equation*}
and zero for the other combinations.\\
In the Robertson test case, the numerical scheme has to deal with several time scales. Therefore, a constant time step size is not suitable for this purpose. 
\begin{figure}[!htp]
\center
    \includegraphics[width=0.8\textwidth]{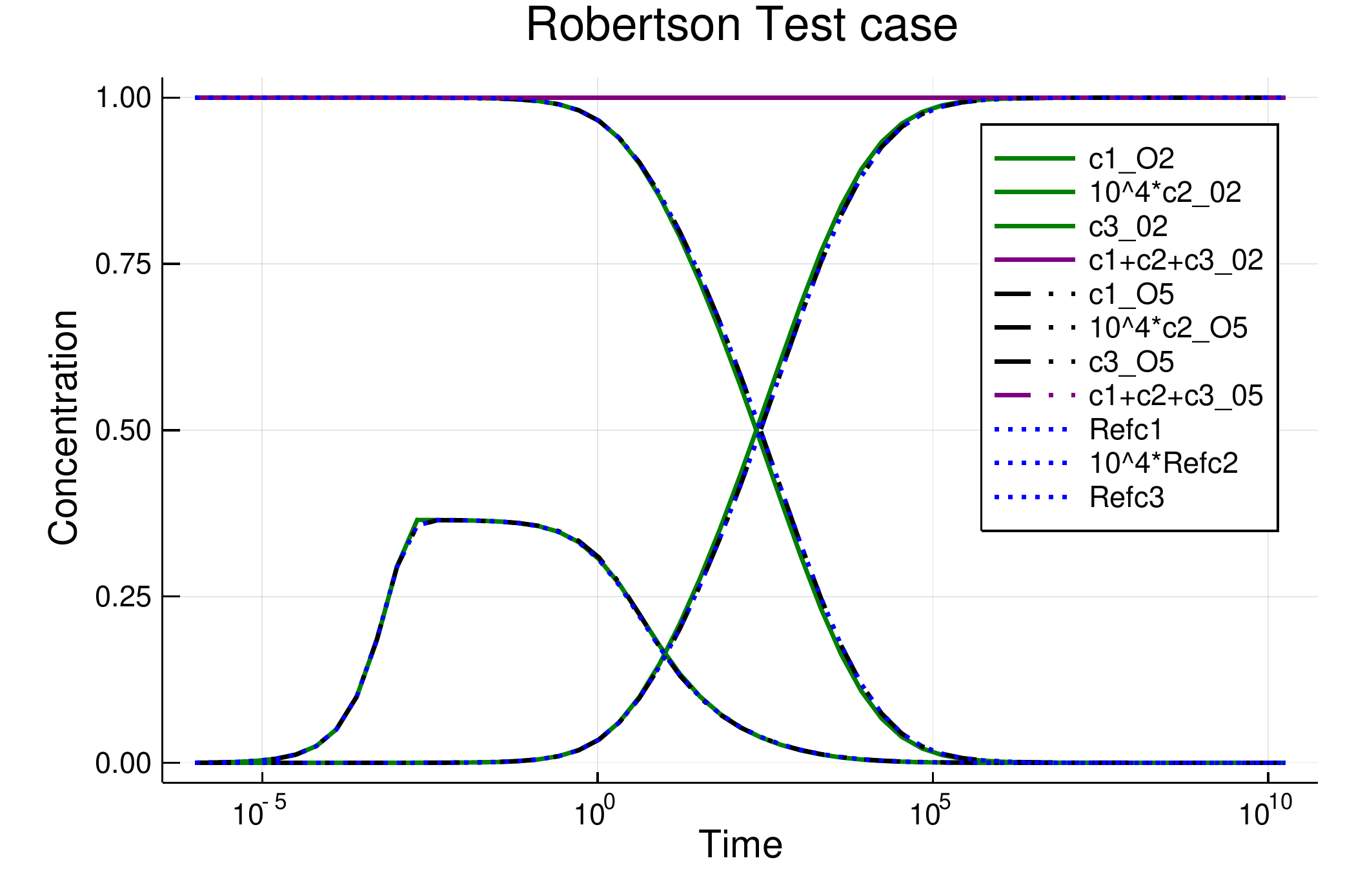}
  \caption{ Second and fifth order solutions and references 
 }
  \label{fig:Robertson}
\end{figure}
Following again the literature \cite{kopecz2018order}, we use increasing time steps $\Delta t_n=2^{n-1}\Delta t_0$ with $\Delta t_0=10^{-6}$,  where $n$ indicates the $n$-th timestep. To make the small $c_2$ values visible on the plot, we 
multiply it by $10^4$. As a comparison, we calculate the reference solution (dotted, blue line) using the 
function  Rodas4\footnote{ A 4-th order A-stable stiffly stable Rosenbrock method with a stiff-aware 3rd order
interpolant.} from Julia, where we split 
the time-interval into 55 subdomains and we solve it on every subdomain with relative tolerance $10^{-20}$ and absolute tolerance $10^{-20}$.
We plot again a second order (green, solid lines) and fifth order (black, dashed-dotted lines) approximations generated by the mPDeC methods and, as it can be seen in figure \ref{fig:Robertson}, the designed methods produce reliable and robust results for this kind of stiff problems. As always, the conservation and the positivity properties are fulfilled.

Finally, we can say that the simulations run in this section express the quality of the mPDeC schemes.
Moreover, they show that all the targeted properties are obtained even for very problematic test cases.

\section{Summary and Outlook}

In this paper, we presented a way to build positivity preserving, conservative and arbitrary high-order numerical schemes
for production-destructions systems of equations. We adapted the idea of \cite{burchard2003high} to build modified 
Patankar type schemes to the Deferred Correction method as an underlying 
scheme. By altering  the $\L^1$ and $\L^2$ operators using the modified Patankar trick, we were able 
to obtain schemes with the desired properties. We proved that the proposed modified Patankar DeC (mPDeC) schemes are
arbitrary high-order, conservative and positivity preserving. In numerical simulations,
we confirmed our theoretical 
considerations with various test cases.
\\
However, further research can be pursued in this direction. As it was 
investigated in  \cite{kopecz2018unconditionally,kopecz2018order} for families of MPRK, it is possible
to study the accuracy and the stability of the method varying the weightings of the production-destruction
terms of the schemes. In the spirit of the work \cite{kopecz2018order},
a change of the weighting of the Patankar modification in the $\L^1$ and $\L^2$ operators 
should be easily applicable to the mPDeC schemes and theoretical investigations will be 
considered in future research, in particular regarding the stability conditions. 
Also the distribution of the subtimesteps between $t^n$ and $t^{n+1}$ plays a big role
on stability and accuracy of the scheme. Many choices are valid and the possible influence
of the properties of the method must be carefully analysed. This idea is already work in progress
for the classical DeC approach and will be extended to the mPDeC version in the future. 
Finally, we want to apply and analyse this type of schemes in context of partial differential equations.
Here, we focus on applications and problems as described in  \cite{huang2019positivity,huang2018third,meister2014unconditionally}.
As one can see, there are still many open questions and tasks for the mPDeC schemes and we are looking 
forward to continue our work in this field.

\appendix
\section{Notation}\label{sec:notation}
We provide a small table \ref{tab:notation} with the notation of symbols used along the paper. Even if some of the notations are ambiguous, the used indices should always clarify the referred meaning. We prefer to keep this notation to keep fluid the reading. 
\begin{table}
\begin{tabular}{r|l}
Notation & Meaning\\
\hline
$I$ & Number of constituents and dimension of the ODE system\\
$i$ & Index for constituents\\ 
$c_i$ & Value of the $i$th constituents\\
$\bc$ & Vector of all the constituents $\bc=(c_1, \dots, c_I )^T$\\
$N$ & Number of time intervals\\
$n$ & Index for a timestep\\
$t^n$ & Timestep\\
$\bc^n$ & Variables at timestep $t^n$\\
$M$ & Number of subtimeintervals in a timeinterval\\
$m$ & Index for subtimesteps\\
$t^{n,m}=t^m$ & Subtimestep\\
$\bc^{n,m} = \bc^m$ & Variable at subtimestep $m$\\
$\bbc$ & Vector of variables at all subtimestep $m=0,\dots, M$\\
$K$ & Number of iterations of the DeC procedure\\
$(k)$ & Index of the iteration\\
$\bc^{n, m,(k)}=\bc^{m,(k)}$ & Variables for timestep $n$ at the subtimestep $m$ and iteration $k$\\
$\bbc^{(k)}$ & Vector of variables for all subtimesteps $m=0,\dots, M$ at the iteration $k$\\
$\L^1 (\cdot) $ & First order operator of DeC procedure\\
$\L^2 (\cdot) $ & High order operator of DeC procedure\\
$\L^2 (\cdot,\cdot) $ & High order operator of mPDeC procedure\\
$\bbc^*$ & Solution of the system $\L^2(\bbc^*)=0$.

\end{tabular}
\caption{Notation table}\label{tab:notation}
\end{table}

\section{Algorithm}\label{sec:algorithm}
We present a pseudo-code for the creation of the mass matrix in Algorithm \ref{algo:mass} and one for the mPDeC algorithm in box Algorithm \ref{algo:mPDeC}. Both algorithms are very simple. The first one consists of 3 loops: 2 for the constituents $i,j=1,\dots, I$ and one for the subtimesteps $r=0,\dots, M$ and an if statement. The second one consists of 3 nested loops: one for timesteps $\lbrace t^n \rbrace_{n=0}^N$, one for corrections of the DeC algorithm $k=1,\dots, K$ and one for the subtimesteps $m=1,\dots, M$.
\begin{algorithm}
	\fontsize{10pt}{10pt}\selectfont
	\caption{Mass} 
	\begin{algorithmic}[1]
		{\REQUIRE Production-destruction functions $p_{i,j}(\cdot),\,d_{i,j}(\cdot)$, previous correction variables $\bbc^{(k-1)}$, actual subtimestep $m$.
		\STATE $\M:=0$
		\FOR{$i=1 $ \TO $ I$}
			\FOR{$j=1$ \TO $I$}
				\FOR{$r=0$ \TO $M$}
					\IF{$\theta_r^m\geq 0$}
						\STATE $\M_{i,j} = \M_{i,j} -\Delta t \theta_r^m \frac{p_{i,j}(\bc^{r,(k-1)})}{c_j^{m,(k-1)}}$
						\STATE $\M_{i,i} = \M_{i,i} +\Delta t \theta_r^m \frac{d_{i,j}(\bc^{r,(k-1)})}{c_i^{m,(k-1)}}$
					\ELSE
						\STATE $\M_{i,j} = \M_{i,j} +\Delta t \theta_r^m \frac{d_{i,j}(\bc^{r,(k-1)})}{c_j^{m,(k-1)}}$
						\STATE $\M_{i,i} = \M_{i,i} -\Delta t \theta_r^m \frac{p_{i,j}(\bc^{r,(k-1)})}{c_i^{m,(k-1)}}$	
					\ENDIF
				\ENDFOR
			\ENDFOR
		\ENDFOR
		}
	\end{algorithmic}\label{algo:mass}
\end{algorithm}

\begin{algorithm}
	\fontsize{10pt}{10pt}\selectfont
	\caption{mPDeC} 
	\begin{algorithmic}[1]
		{\REQUIRE Production-destruction functions $p_{i,j}(\cdot),\,d_{i,j}(\cdot)$, timesteps $\lbrace t^n \rbrace_{n=0}^N$, initial condition $\bc^0$.
			\FOR{$n=1 $ \TO $ N$}
				\FOR{$k=0$ \TO $K$}
					\STATE Set $\bc^{0,(k)}:=\bc^n $
				\ENDFOR
				\FOR{$m=1$ \TO $M$}
					\STATE Set $\bc^{m,(0)}:=\bc^n $
				\ENDFOR
				\FOR{$k=1$ \TO $K$}
					\FOR{$m=1$ \TO $M$}
						\STATE Compute the mass matrix $\M (\bc^{m,(k-1)}):=$Mass$(\bbc^{(k-1)}, m)$ using algorithm \ref{algo:mass}
						\STATE Compute $\bc^{m,(k)}$ solving the linear system $\M(\bc^{m,(k-1)}) \bc^{m,(k)} = \bc^{n}$  given by \eqref{eq:explicit_dec_correction}
					\ENDFOR
				\ENDFOR
				\STATE Set $\bc^{n+1}:=\bc^{M,(K)}$
			\ENDFOR
		}
	\end{algorithmic}\label{algo:mPDeC}
\end{algorithm}

\section*{Acknowledgements}
P. \"Offner has been funded by the the SNF project (Number 175784). \\
Davide Torlo is supported by ITN ModCompShock project funded by the European Union’s Horizon 2020 research and innovation program under the Marie Sklodowska-Curie grant agreement No 642768.

\bibliographystyle{abbrv}
\bibliography{literature}

\begin{thebibliography}{10}

\bibitem{abgrall2017dec}
R.~Abgrall.
\newblock High order schemes for hyperbolic problems using globally continuous
  approximation and avoiding mass matrices.
\newblock {\em Journal of Scientific Computing}, 73(2):461--494, Dec 2017.

\bibitem{abgrall2018connection}
R.~Abgrall, E.~l. Meledo, and P.~{\"O}ffner.
\newblock On the connection between residual distribution schemes and flux
  reconstruction.
\newblock {\em arXiv preprint arXiv:1807.01261}, 2018.

\bibitem{abgrall2018asymptotic}
R.~Abgrall and D.~Torlo.
\newblock Asymptotic preserving deferred correction residual distribution
  schemes.
\newblock {\em arXiv preprint arXiv:1811.09284}, 2018.

\bibitem{burchard2003high}
H.~Burchard, E.~Deleersnijder, and A.~Meister.
\newblock A high-order conservative patankar-type discretisation for stiff
  systems of production--destruction equations.
\newblock {\em Applied Numerical Mathematics}, 47(1):1--30, 2003.

\bibitem{burchard2005application}
H.~Burchard, E.~Deleersnijder, and A.~Meister.
\newblock Application of modified patankar schemes to stiff biogeochemical
  models for the water column.
\newblock {\em Ocean Dynamics}, 55(3-4):326--337, 2005.

\bibitem{christlieb2010integral}
A.~Christlieb, B.~Ong, and J.-M. Qiu.
\newblock Integral deferred correction methods constructed with high order
  runge-kutta integrators.
\newblock {\em Mathematics of Computation}, 79(270):761--783, 2010.

\bibitem{dutt2000dec}
A.~Dutt, L.~Greengard, and V.~Rokhlin.
\newblock {Spectral Deferred Correction Methods for Ordinary Differential
  Equations}.
\newblock {\em BIT Numerical Mathematics}, 40(2):241--266, 2000.

\bibitem{gottlieb2011strong}
S.~Gottlieb, D.~I. Ketcheson, and C.-W. Shu.
\newblock {\em Strong stability preserving {R}unge-{K}utta and multistep time
  discretizations}.
\newblock World Scientific, 2011.

\bibitem{hairer1991solving}
E.~Hairer, S.~P. N{\o}rsett, and G.~Wanner.
\newblock {\em Solving ordinary differential equations. 1, Nonstiff problems}.
\newblock Springer-Vlg, 1991.

\bibitem{hense2010representation}
I.~Hense and A.~Beckmann.
\newblock The representation of cyanobacteria life cycle processes in aquatic
  ecosystem models.
\newblock {\em Ecological Modelling}, 221(19):2330--2338, 2010.

\bibitem{huang2019positivity}
J.~Huang and C.-W. Shu.
\newblock Positivity-preserving time discretizations for
  production--destruction equations with applications to non-equilibrium flows.
\newblock {\em Journal of Scientific Computing}, 78(3):1811--1839, 2019.

\bibitem{huang2018third}
J.~Huang, W.~Zhao, and C.-W. Shu.
\newblock A third-order unconditionally positivity-preserving scheme for
  production--destruction equations with applications to non-equilibrium flows.
\newblock {\em Journal of Scientific Computing}, pages 1--42, 2018.

\bibitem{ketcheson2008highly}
D.~I. Ketcheson.
\newblock Highly efficient strong stability-preserving {R}unge-{K}utta methods
  with low-storage implementations.
\newblock {\em SIAM Journal on Scientific Computing}, 30(4):2113--2136, 2008.

\bibitem{kopecz2018order}
S.~Kopecz and A.~Meister.
\newblock On order conditions for modified patankar--runge--kutta schemes.
\newblock {\em Applied Numerical Mathematics}, 123:159--179, 2018.

\bibitem{kopecz2018unconditionally}
S.~Kopecz and A.~Meister.
\newblock Unconditionally positive and conservative third order modified
  patankar--runge--kutta discretizations of production--destruction systems.
\newblock {\em BIT Numerical Mathematics}, pages 1--38, 2018.

\bibitem{kopecz2019existence}
S.~Kopecz and A.~Meister.
\newblock On the existence of three-stage third-order modified
  patankar--runge--kutta schemes.
\newblock {\em Numerical Algorithms}, pages 1--12, 2019.

\bibitem{liu2008strong}
Y.~Liu, C.-W. Shu, and M.~Zhang.
\newblock Strong stability preserving property of the deferred correction time
  discretization.
\newblock {\em Journal of Computational Mathematics}, pages 633--656, 2008.

\bibitem{meister2014unconditionally}
A.~Meister and S.~Ortleb.
\newblock On unconditionally positive implicit time integration for the dg
  scheme applied to shallow water flows.
\newblock {\em International Journal for Numerical Methods in Fluids},
  76(2):69--94, 2014.

\bibitem{minion2003dec}
M.~L. Minion.
\newblock Semi-implicit spectral deferred correction methods for ordinary
  differential equations.
\newblock {\em Commun. Math. Sci.}, 1(3):471--500, 09 2003.

\bibitem{patankar1980numerical}
S.~Patankar.
\newblock {\em Numerical heat transfer and fluid flow}.
\newblock CRC press, 1980.

\bibitem{rackauckas2017differentialequations}
C.~Rackauckas and Q.~Nie.
\newblock Differentialequations. jl--a performant and feature-rich ecosystem
  for solving differential equations in julia.
\newblock {\em Journal of Open Research Software}, 5(1), 2017.

\bibitem{ranocha2018L2stability}
H.~Ranocha and P.~{\"O}ffner.
\newblock {$L_2$} stability of explicit {R}unge-{K}utta schemes.
\newblock {\em Journal of Scientific Computing}, 75(2):1040--1056, 05 2018.

\bibitem{ranocha2016summation}
H.~Ranocha, P.~{\"O}ffner, and T.~Sonar.
\newblock Summation-by-parts operators for correction procedure via
  reconstruction.
\newblock {\em Journal of Computational Physics}, 311:299--328, 2016.

\bibitem{wanner1996solving}
G.~Wanner and E.~Hairer.
\newblock {\em Solving ordinary differential equations II}.
\newblock Springer Berlin Heidelberg, 1996.

\end{thebibliography}

\end{document}